\documentclass{amsart}
\usepackage{cite}
\usepackage{amssymb,amsmath,amsthm}




\newtheorem{theorem}{Theorem}
\newtheorem{teo}{Theorem}
\newtheorem{prop}[teo]{Proposition}
\newtheorem{cor}[teo]{Corollary}
\newtheorem{lema}[teo]{Lemma}

\theoremstyle{definition}

\theoremstyle{remark}

\def\Z{\mathbb{Z}}

\setlength{\parindent}{0pt}

\date{}
\def\be{\begin{enumerate}}
\def\ee{\end{enumerate}}

\def\Z{\mathbb{Z}}
\def\S{\mathbb{S}}

\def\Zel{\mathcal{Z}}

\def \kf{\hat{k}\left(n,t\right)}
\def \kfc{\hat{k}\left(0,t\right)}
\def \cls{C^l\left([0,2\pi]\right)}

\newcommand{\ust}[1]{\hat{u}^{*({#1})}}

\newcommand{\kst}[1]{\hat{k}^{*({#1})}}

\newcommand{\dif}[1]{\frac{d{#1}}{dt}}
\newcommand{\norma}[2]{\left\|{#1}\right\|_{#2}}

\newcommand{\zf}[1]{\hat{z}({#1},t)}



\title{On the rate of convergence of the $p$-curve shortening flow}
\author{Jean C. Cortissoz, Andr\'es Galindo and Alexander Murcia}
\address{Department of Mathematics, Universidad de los Andes, Bogot\'a DC, Colombia.}
\begin{document}

\maketitle
\begin{abstract}
    In this paper we give rates of convergence for the $p$-curve shortening flow
    for $p\geq 1$ an integer, which improves on the known estimates and which are
    probably sharp.
\end{abstract}
\section{Introduction}

Let us first introduce the main character of this story, the $p$-curve
shortening flow, with $p$ a positive integer. So, we let
\[
x:\,\mathbb{S}^1\times\left[0,T\right)\longrightarrow \mathbb{R}^2
\]
be a family of smooth convex embeddings of $\mathbb{S}^1$, the unit circle, into $\mathbb{R}^2$.
We say that $x$ satisfies the $p$-curve shortening flow, $p\geq 1$, if
$x$ satisfies
\begin{equation}
\label{generalflow}
\frac{\partial x}{\partial t}=-\frac{1}{p}k^{p}N,
\end{equation}
where $k$ is the curvature of the embedding and $N$ is the normal vector pointing outwards the
region bounded by $x\left(\cdot,t\right)$. 

This is just a natural generalisation of the well known and well
studied curve shortening flow. A solution to \ref{generalflow} starting from an embedded convex simple curve
will contract, via embedded convex curves, towards a round point in finite time: this means
that if we start with a simple convex curve, via the $p$-curve shortening, after
a convenient normalisation, which includes a time reparametrisation, the embedded curves converge smoothly to a circle
(see \cite{Andrews}). It is also known that this convergence
is exponential in the following sense (here $\tilde{k}$ denotes the curvature of the embedded curves after
normalisation)
\[
\left\|\tilde{k}^{\left(n\right)}\right\|_{\infty}\leq Ce^{-\delta t}, 
\]
with where $\tilde{k}^{\left(n\right)}$ represents the $n$-derivative of 
$\tilde{k}$ with respect to the arclength parameter in $\mathbb{S}^1$, $n\geq 1$, and $\delta>0$. For the curve shortening flow, we can use
as $\delta$ any $2\alpha$ for $0<\alpha<1$, this was proved
by Gage and Hamilton in their by now famous (by mathematical standards)
paper \cite{GageHamilton}. For the $p$-curve shortening, Huang in \cite{Huang} showed that 
$\delta$ can be taken as $2\alpha p$, with the same restrictions on 
$\alpha$. Interestingly enough, with
the exception of the curve shortening 
flow ($p=1$), it has not been showed that
$\tilde{k}\rightarrow 1$ exponentially!  
For the curve shortening flow ($p=1$),
in the book \cite{ChouZhu} exponential convergence
of the curvature towards 1 is shown, and
Andrews and Bryan 
showed in \cite{AndrewsBryan} (although they did not stated explicitly) that
$\tilde{k}\rightarrow 1$ as fast as $e^{-2\tau}$.

Related to this problem is the mean curvature flow, and Sesum in \cite{Sesum},
using Huisken's work as a departure point, has given
sharp rates of convergence for this flow. 

The main goal of this paper to give better rates of convergence 
for the $p$-curve shortening flow, $p\geq 1$ an integer, than the ones previously known. 
Our main result, from which the said rates of convergence can be deduced,
is the following.

\begin{theorem}
\label{mainresult}
	Let $\psi>0$ be the curvature of the initial condition to (\ref{generalflow}). Then there exists a constant $c_p>0$ such that if
		
		\begin{equation}
		\label{smallnesscondition}
		\hat{\psi}(0)\geq c_p\norma{\psi}{2},
		\end{equation} 
		
		then the solution to the normalised $p$-curve shortening flow ($p$ a positive integer), that is
		 for the curvature $\tilde{k}$ of the curves given by the rescaled embedding 
		 $\left(\dfrac{p+1}{p}\right)^{\frac{1}{p+1}}\left(T-t\right)^{-\frac{1}{p+1}}x$, with rescaled 
		 time parameter $\tau=-\dfrac{1}{p+1}\log\left(1-\dfrac{t}{T}\right)$,
		 where $0<T<\infty$ is the maximum time of existence for (\ref{generalflow}),
		it holds that
\[
\left\|\tilde{k}-1\right\|_{C^{l}\left(\mathbb{S}^1\right)}\leq C_{p,l}e^{-\left(3p-1\right)\tau},
\]
where $C_{p,l}$ is a constant that only depends on $p,l$ and $\psi$.
\end{theorem}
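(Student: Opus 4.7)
The plan is to linearise the normalised flow about its equilibrium $\tilde k\equiv 1$ and read the asymptotic decay rate off the spectrum of the linearisation on $\mathbb{S}^1$. Parametrising the convex curves by the outward-normal angle $\theta\in[0,2\pi)$, a chain-rule computation with the rescaling $\tau=-(p+1)^{-1}\log(1-t/T)$ converts (\ref{generalflow}) into an equation of the shape
\[
\tilde k_\tau=\tilde k^{2}\bigl[(\tilde k^{p})_{\theta\theta}+\tilde k^{p}\bigr]-\tilde k,
\]
in which the equilibrium $\tilde k=1$ is manifest. Writing $u:=\tilde k-1$, its linearisation reads $u_\tau=p\,u_{\theta\theta}+(p+1)\,u+\mathcal{N}[u]$, with $\mathcal{N}[u]$ a polynomial in $u,u_\theta,u_{\theta\theta}$ that vanishes to second order at $u=0$. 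On the Fourier mode $e^{in\theta}$ the linear operator acts by $\lambda_n=(p+1)-pn^{2}$; for $|n|\geq 2$ the least decay rate is $|\lambda_{\pm 2}|=3p-1$, which is exactly the exponent claimed.

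The positive eigenvalues $\lambda_0=p+1$ and $\lambda_{\pm 1}=1$ are spurious geometric modes. The modes $n=\pm 1$ encode infinitesimal translations of $\mathbb{R}^{2}$ and can be killed by centring the rescaled curves at the extinction point supplied by \cite{Andrews}. The $n=0$ mode is pinned down by the rescaling constant prescribed in the statement: $\hat u_0$ satisfies a scalar equation coming from an integrated invariant (for example the enclosed area, whose normalised value tends to that of the unit disk), and a direct check shows that $\hat u_0$ also decays at least as fast as $e^{-(3p-1)\tau}$. Thus the interesting dynamics take place on the orthogonal complement spanned by the modes with $|n|\geq 2$.

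With the low modes eliminated, the core of the argument is a Fourier/energy estimate for
\[
E(\tau):=\sum_{|n|\geq 2}|\hat u_n(\tau)|^{2},
\]
yielding $E'(\tau)\leq -2(3p-1)\,E(\tau)+(\text{nonlinear remainder})$. Huang's theorem \cite{Huang} already guarantees $\|u\|_{C^{l}}\to 0$ at the slower rate $e^{-2\alpha p\tau}$ ($\alpha<1$), which, combined with the pinching hypothesis $\hat\psi(0)\geq c_p\|\psi\|_{2}$ — a quantitative way of saying that the initial curvature is close to a constant and hence the initial curve is close to a circle — ensures the relevant norms of $u$ are small enough from $\tau=0$ onward for the nonlinear remainder to be absorbed into the coercive linear term. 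Gr\"onwall then delivers $\|u(\cdot,\tau)\|_{L^{2}}\leq C\,e^{-(3p-1)\tau}$.

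I expect the main obstacle to be the nonlinear estimate itself, because $\mathcal{N}[u]$ contains genuinely second-order quadratic terms such as $u\,u_{\theta\theta}$, and a plain $L^{2}$ energy is borderline. The cleanest way around this, which also produces the $C^{l}$ estimate in one sweep, is to run the same energy argument at each Sobolev level $H^{m}$ in parallel, using integration by parts to redistribute derivatives and the uniform higher-regularity bounds from \cite{Huang} as \emph{a priori} input. Once $\|u\|_{H^{m}}\leq C_{m}\,e^{-(3p-1)\tau}$ is established for every $m$, the $C^{l}$ conclusion follows by Sobolev embedding, and the constant $C_{p,l}$ absorbs the dependence on $p$, $l$ and $\psi$ through these \emph{a priori} bounds together with the pinching constant $c_p$.
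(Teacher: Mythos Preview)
Your spectral picture is correct, but the mechanism you propose for neutralising the unstable modes $n=\pm 1$ is wrong, and this is the crux of the whole problem. In the normal-angle parametrisation $k=k(\theta)$, the curvature of a convex curve is \emph{invariant} under translations of $\mathbb{R}^{2}$: if $h$ is the support function then $k=1/(h_{\theta\theta}+h)$, and translating by $(a,b)$ adds $a\cos\theta+b\sin\theta$ to $h$, leaving $h_{\theta\theta}+h$ unchanged. So ``centring at the extinction point'' does nothing whatsoever to the Fourier modes of $k$; there is no gauge freedom in $k(\theta)$ that can be used to kill $\hat u_{\pm 1}$. The genuine reason the $\pm 1$ modes are slaved to the others is the nonlinear closing condition
\[
Q(k)=\int_{0}^{2\pi}\frac{e^{i\theta}}{k(\theta,t)}\,d\theta=0,
\]
which every curvature of a closed convex curve satisfies. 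Linearising $Q$ at $k\equiv 1$ gives exactly $\hat v(\pm 1)=0$, so at the linear level the constraint manifold is transverse to the unstable $\pm 1$ eigenspace; but exploiting this nonlinearly requires work. In the paper this is the content of Lemma~\ref{controllingsmallwavenumbers} and the argument around \eqref{decayintimeestimate}: one first proves the $(T-t)^{\alpha(2,p)}$ decay for $|n|\ge 2$, and then expands $Q(k)=0$ as a power series in the rescaled modes to deduce that $\hat k(\pm 1,t)$ inherits the same decay. Without this step your energy inequality $E'\le -2(3p-1)E+\text{remainder}$ simply fails, because the linear part contributes $+2|\hat u_{\pm 1}|^{2}$ to $E'$, and nothing in your outline prevents these modes from growing.

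Your treatment of the $n=0$ mode is also too vague. The paper does not use an integrated invariant such as area; instead it bootstraps the ODE for $\hat k(0,t)$: once $|\hat k(n,t)|\le C(T-t)^{(3p-2)/(p+1)}$ for $n\neq 0$ is known, the nonlinear source in $\frac{d}{dt}\hat k(0,t)=\frac{1}{p}\hat k(0,t)^{p+2}+\cdots$ is $O\bigl((T-t)^{(6p-4)/(p+1)}\hat k(0,t)^{p}\bigr)$, and integrating gives $\bigl(\frac{p+1}{p}\bigr)^{1/(p+1)}(T-t)^{1/(p+1)}\hat k(0,t)=1+O\bigl((T-t)^{(6p-2)/(p+1)}\bigr)$, i.e.\ $|\hat{\tilde u}_{0}|\le Ce^{-(6p-2)\tau}$. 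Finally, note that the paper's overall strategy is rather different from yours: it works with the \emph{unnormalised} equation, proves pointwise-in-$n$ decay $|\hat k(n,t)|\le De^{-\xi|n|}(T-t)^{(3p-2)/(p+1)}$ via Duhamel and an iterative bootstrap (Proposition~\ref{propfirstbound} and the discussion after it), and only at the end translates into the normalised variable. Your proposed $H^{m}$-energy route on the normalised equation is in principle viable, but only after you replace the translation argument by the closing-condition argument for $n=\pm 1$ and supply a concrete mechanism for $n=0$.
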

Together with Theorem I1.1 from \cite{Andrews}, this gives the following
\begin{theorem}
For any simple convex curve as initial data, the normalised version of (\ref{generalflow}), converges towards a circle smoothly and the
curvature of the normalised embeddings satisfy
\[
\left\|\tilde{k}-1\right\|_{C^{k}\left(\mathbb{S}^1\right)}\leq C_{p,k}e^{-\left(3p-1\right)\tau},
\]
where $C_{p,k}$ is a constant that only depends on $p,k$ and the curvature of the initial condition.
\end{theorem}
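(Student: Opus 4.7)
The plan is to bootstrap Andrews' qualitative smooth convergence result (Theorem I1.1 of \cite{Andrews}) to the quantitative rate of Theorem \ref{mainresult}, by restarting the flow after a sufficient waiting period, once the curvature has already entered a $C^0$-neighbourhood of the round circle in which the smallness condition (\ref{smallnesscondition}) is automatic.

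First I would observe that the smallness condition (\ref{smallnesscondition}) is homogeneous, hence scale invariant under $\psi \mapsto \lambda \psi$, since both $\hat{\psi}(0)$ and $\|\psi\|_2$ are linear in $\psi$; and that at $\psi \equiv 1$ it reads $1 \geq c_p \sqrt{2\pi}$, which is strict for any $c_p < 1/\sqrt{2\pi}$. By continuity of the zeroth Fourier coefficient and of the $L^2$ norm under $C^0$ perturbations, the condition therefore persists on a $C^0$-neighbourhood of the constant function $1$.

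Next, by Andrews' theorem, for any simple convex initial curve the normalised curvature $\tilde k(\cdot,\tau)$ converges to $1$ smoothly as $\tau \to \infty$. Hence there is a time $\tau_0 = \tau_0(\psi,p)$ past which $\tilde k(\cdot,\tau_0)$ lies in the $C^0$-neighbourhood identified above and therefore satisfies (\ref{smallnesscondition}); by scale invariance, so does the unrescaled curvature $k(\cdot,t_0)$ at the corresponding unnormalised time $t_0$. I would then restart the flow at $t_0$ with initial datum $x(\cdot,t_0)$: this produces a solution of (\ref{generalflow}) on $[0, T-t_0)$ whose initial curvature satisfies the hypothesis of Theorem \ref{mainresult}. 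A short computation shows that the rescaled curvature associated with the restarted flow is exactly the original $\tilde k$, viewed in the shifted rescaled time $\tau' = \tau - \tau_0$; applying Theorem \ref{mainresult} therefore yields
\[
\|\tilde k(\cdot,\tau) - 1\|_{C^l(\mathbb{S}^1)} \leq C'_{p,l}\, e^{-(3p-1)(\tau-\tau_0)}
\]
for $\tau \geq \tau_0$. Absorbing the factor $e^{(3p-1)\tau_0}$ into the constant, and using smooth precompactness of $\tilde k$ on the compact interval $[0,\tau_0]$ (again via Andrews) to handle the initial window, delivers the announced estimate with a constant $C_{p,k}$ depending only on $p$, $k$, and the initial curvature.

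The only delicate point is the scaling bookkeeping upon restart: one must verify that the two normalisations, the original one involving $T$ and the one for the restarted flow involving $T-t_0$, produce the same family of normalised curves differing only by the additive time shift $\tau_0$. This is an immediate consequence of the self-similar scaling of (\ref{generalflow}) and is not a real obstruction; the whole argument is a soft reduction of the general case to the near-circle case already handled quantitatively by Theorem \ref{mainresult}.
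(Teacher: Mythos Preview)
Your argument is correct and is precisely the paper's approach: the paper's proof is the single observation that Andrews' convergence theorem forces (\ref{smallnesscondition}) to hold eventually, whereupon Theorem \ref{mainresult} applies after restarting. One correction that only helps you: $\|\cdot\|_2$ here is not the $L^2$-norm but the Fourier seminorm $\sup_{\xi}|\xi|^2|\hat f(\xi)|$ defined in Section~\ref{Basicdefinitions}, which vanishes at $\psi\equiv 1$, so the smallness condition is trivially satisfied near constants for \emph{any} value of $c_p$ and your restriction $c_p<1/\sqrt{2\pi}$ is unnecessary.
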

Indeed, by the theorem of Andrews referred to above, (\ref{smallnesscondition}) eventually holds if we start (\ref{generalflow}) with
a given convex simple curve as initial data. The rate of convergence given by our main result seems to be the sharpest possible rate
of convergence for the $p$-curve shortening flow (see the remark at the end of this paper).

A naive idea for proving Theorem \ref{mainresult} would
be to use the Parabolic PDE to which the normalised version of $p$-curve shortening
flow is equivalent to (see equation (\ref{eq:bvpnormalized}) in Section \ref{wavenumberscontrolled}), and then linearise around the
steady solution to obtain exponential convergence.  However, if we linearise 
around the steady solution, the elliptic part of the parabolic operator corresponding to the $p$-curveshortening flow
has a negative eigenvalue, so no exponential convergence
should be expected (see the discussion in \cite{CortissozMurcia} right after Theorem 2.2, and 
notice that when $\lambda=1$, a negative eigenvalue occurs). The good news here is that, being $k$ a curvature, it satisfies
an important identity, which is responsible for us being
able to get this exponential convergence. 

Our methods
are based on the techniques employed in \cite{CortissozMurcia}, that is to say on the Fourier method.
Hence, we will transform our problem into (finite dimensional) approximations  
of an infinite dimensional dynamical system, for which appropriate estimates will be proved,
and which will finally lead to a proof of Theorem \ref{mainresult}, proof which is given in the final section of this paper.
The intermediate sections are devoted to show these appropriate estimates, which, 
in short, amount to controlling the Fourier wavenumbers of a solution to (\ref{generalflow}) in terms
of the average of the curvature; from this we will be able to show a time decay for the Fourier wavenumbers different
from the average (which in fact blows-up), and which, as we said before, will lead to a proof of Theorem \ref{mainresult}.

\section{Basic definitions and notation}
\label{Basicdefinitions}

When the initial curve is convex, the $p$-curve shortening
flow is equivalent to the following
 Boundary Value Problem:
\begin{equation}
\begin{cases} 
\dfrac{\partial k}{\partial t}=k^2\left(k^{p-1}\dfrac{\partial^2k}{\partial \theta^2}+(p-1)k^{p-2}\left(\dfrac{\partial k}{\partial \theta}\right)^2+\dfrac{1}{p}k^{p}\right)\quad\text{in}\quad \left[0,2\pi\right]\times\left(0,T\right) \\
k(\theta,0)=\psi(\theta)\quad\text{on}\quad\left[0,2\pi\right],\\ 
\end{cases}
\label{eq:bvp} 
\end{equation}

$p\in\Z^+$, with periodic boundary conditions, and $\psi$ a \emph{strictly positive function.} 
Notice that the Maximum Principle implies that $k$ must remain positive for all times (i.e. a convex
curve remains convex).
We will need to compute 
finite dimensional approximations of the previous partial
differential equation in Fourier space,
so we must establish some definitions and notation. 
Recall that
For $f\in L^2\left[0,2\pi\right]$, its Fourier expansion is given by:

\[\sum_{n\in\Z}\hat{f}\left(n\right)e^{i n\theta}\]

where, 

\[\hat{f}(n)=\frac{1}{2\pi}\int_{0}^{2\pi} f(\theta) e^{-in\theta }\,d\theta.\]
We shall refer to $\hat{f}\left(n\right)$ as the \emph{Fourier wavenumbers} of $f$.

We will also adopt the notation 
\[\displaystyle{\ust{m}(q_1,q_2,\dots,q_{m},t)=\hat{u}(q_1,t)\hat{u}(q_2,t)\cdots\hat{u}(q_m,t)},\]

\[H(p,q_1,q_2)=\frac{1}{p}-(p-1)q_1 q_2 -q_1^2,\]
and define the following sets
\[\mathcal{B}_{n}=\left\{\left(q_1,\dots,q_{p+2}\right)\in \Z^{p+2}:q_{p+2}=n-q_1-\cdots-q_{p+1}\right\},\]

\[\mathcal{A}_{n}=\left\{\mathbf{q}\in\mathcal{B}_{n}:\text{there are } 1\leq i < j\leq p+2\text{ such that } b_i\neq 0\text{ and } b_j\neq 0\right\},\]

and, 

\[\mathcal{C}_n=\left\{\mathbf{q}\in\mathcal{A}_{n}: q_j\neq 0, \pm 1,\text{for all } 1\leq j \leq p+2\right\}. \]
From now on $\mathcal{Z}$ will denote a finite set of integers which contains 0 (i.e, $0\in\mathcal{Z}$),
and which is symmetric around 0 (i.e., if $n\in\mathcal{Z}$ then $-n\in\mathcal{Z}$).

Using this notation, in Fourier space, the $p$-curve shortening
flow can be approximated by the following finite dimensional
dynamical system:

\begin{equation}
\begin{cases} 
 \dif{}\hat{k}(0,t)&=\dfrac{1}{p}\kfc^{p+2}+\sum_{\mathbf{q}\in\mathcal{A}_0\cap \Zel^{p+2}}H\left(p,q_1,q_2\right)\kst{p+2}\left(\mathbf{q},t\right),\\
 \\
\dif{}\hat{k}(n,t)&=\left(\dfrac{p+2}{p}-n^2\right)\kfc^{p+1}\kf\\&+\sum_{\mathbf{q}\in\mathcal{A}_n\cap \Zel^{p+2}}H\left(p,q_1,q_2\right)\kst{p+2}\left(\mathbf{q},t\right),\text{ if } n\neq 0,n\in\Zel\\
\end{cases}
\label{eq:efourier}
\end{equation}

with initial condition 

\[\hat{k}(n,0)=\hat{\psi}(n),\text{ if }n\in\Zel.\]

Formally, the $\hat{k}$ in the system right above should bear, for instance, a subindex which makes its dependence on $\mathcal{Z}$
explicit, but as this is understood
from now on, we will suppress it in what follows (and as our estimates will not depend on $\mathcal{Z}$, this
should be of no importance).

Notice also that (\ref{eq:efourier}) is an autonomous system, 
so there is a unique and smooth solution for a short time (see \cite{CoddingtonLevinson}). We will also make use of the seminorms $\norma{\cdot}{\beta}$,
which are defined as in  \cite{CortissozMurcia} as follows:

\[\norma{f}{\beta}=\max\left\{\sup_{\xi\in\Z}\left|\xi\right|^{\beta}\left|Re\left(\hat{f}\left(\xi\right)\right)\right|,\sup_{\xi\in\Z}\left|\xi\right|^{\beta}\left|Im\left(\hat{f}\left(\xi\right)\right)\right|\right\}.\]

As usual, we define $\cls$, $l=0,1,2,\dots,$ as the space of functions with continuous derivatives of order $l$, equipped with the norm

\[\norma{f}{\cls}=\max_{j=0,\dots,l}\sup_{\theta\in[0,2\pi]}\left|\frac{d^j f(\theta)}{d\theta^j}\right|\] 

\section{Technical Lemmas and intermediate results}

We shall follow closely the arguments presented in \cite{Murcia}. Therefore we must show that for given a solution to (\ref{eq:efourier}), we can
control the Fourier wavenumbers different from $0$ in terms of the $0$-th wavenumber. The fact that the first eigenvalue $\lambda_1<0$ is the main difficulty
we must face, as this makes difficult to control the $\pm 1$- wave number in terms of the $0$ th wave number. 
Once we have done this, all that is left is to follow the arguments presented in \cite{CortissozMurcia, Murcia}. The key to our proofs is that a curvature function of a locally convex curve satisfies 

\[Q(k)=\int_{0}^{2\pi}\frac{e^{i\theta}}{k(\theta,t)}\,d\theta=0,\] 

since this identity, once we have control over the higher Fourier wavenumbers (those with $\left|n\right|\geq 2$) assuming
control over the $\pm 1$ wavenumbers, allows us to control the $\pm 1$ Fourier wavenumbers.

The careful reader must notice that the proofs given in this paper, our estimates are given for
system (\ref{eq:efourier}), and that this estimates are independent of $\mathcal{Z}$, this
allows us to take a limit so the results are valid for the full system (\ref{eq:bvp}).

\subsection{Controlling the Fourier wavenumbers} 

We start with a technical lemma. 
\begin{lema}
	There is a  $0<\delta <\frac{1}{4}$ such that if the initial condition $\psi$ of (\ref{eq:bvp}) satisfies:
	
	\[2\delta\cdot\psi(0)\geq q^2|\hat{k}(q,t)|,\]
	
	and 
	
	\[\hat{k}(0,t)\geq\left(1-\delta\right)\hat{\psi}(0),\]
	for $t\in (0,\tau)$, then
	$\hat{k}\left(0,t\right)$
	is non decreasing.\label{l:tec}
\end{lema}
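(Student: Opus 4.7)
The plan is to differentiate $\kfc$ using the first equation of (\ref{eq:efourier}) and show that the ``correction'' sum over $\mathcal{A}_0\cap\Zel^{p+2}$ cannot overwhelm the positive leading term $\frac{1}{p}\kfc^{\,p+2}$. Write $A:=\kfc$ and $\varepsilon:=\frac{2\delta}{1-\delta}$; the two hypotheses translate to $A\geq(1-\delta)\hat{\psi}(0)$ together with $|\hat{k}(q,t)|\leq \varepsilon A/q^2$ for every $q\neq 0$ in $\Zel$. The objective then becomes the estimate
\[
\left|\sum_{\mathbf{q}\in\mathcal{A}_0\cap\Zel^{p+2}} H(p,q_1,q_2)\,\kst{p+2}(\mathbf{q},t)\right|\;\leq\; C_p\,\varepsilon^2\,A^{\,p+2},
\]
with a constant $C_p$ depending only on $p$. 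Granted this bound, one obtains $\dif{}\hat{k}(0,t)\geq \bigl(\tfrac{1}{p}-C_p\varepsilon^2\bigr)A^{p+2}\geq 0$ for every $\delta\in(0,1/4)$ small enough to make $C_p\varepsilon^2<1/p$, which proves the lemma.

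The sum would be handled by partitioning $\mathcal{A}_0\cap\Zel^{p+2}$ according to the subset $S\subseteq\{1,\dots,p+2\}$ of indices where $q_i\neq 0$; by the very definition of $\mathcal{A}_0$ we have $|S|\geq 2$. The indices outside $S$ contribute factors of $A$, while those inside $S$ contribute at most $\varepsilon A/q_i^2$, giving
\[
|\kst{p+2}(\mathbf{q},t)|\;\leq\;\varepsilon^{|S|}A^{p+2}\prod_{i\in S}\frac{1}{q_i^{\,2}}.
\]
Summing over each pattern and then over the finitely many patterns, the leading contribution corresponds to $|S|=2$, already of size $\varepsilon^2 A^{p+2}$; patterns with $|S|\geq 3$ carry strictly higher powers of $\varepsilon$ and are absorbed into $C_p$.

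The main technical point is incorporating the polynomial weights carried by $H(p,q_1,q_2)=\frac{1}{p}-(p-1)q_1q_2-q_1^2$. The $q_1^2$ summand simply cancels the $q_1^{-2}$ decay of $|\hat{k}(q_1,t)|$ but leaves every other decay factor intact, so summability in the remaining variables (governed by tails of $\sum 1/q^2$) is unaffected. The cross term $(p-1)|q_1 q_2|$ is absent for $p=1$ but more delicate for $p\geq 2$: it erodes the decay in both $q_1$ and $q_2$ down to $|q_1|^{-1}|q_2|^{-1}$, which on its own is not summable. To close this case I would exploit the constraint $q_{p+2}=-(q_1+\cdots+q_{p+1})$ defining $\mathcal{B}_0$: when $|q_1|$ or $|q_2|$ is large, $q_{p+2}$ is forced to be of comparable size, and the factor $q_{p+2}^{-2}$ restores summability via a Young-type convolution estimate on the free variables. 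Once this convergence of the multi-sum is established, collecting all contributions yields the desired $C_p\varepsilon^2 A^{p+2}$ bound and the lemma follows as indicated.
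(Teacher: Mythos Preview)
Your proposal is correct and takes essentially the same approach as the paper: both argue that the correction sum over $\mathcal{A}_0\cap\Zel^{p+2}$ is $O(\delta)\,\hat{k}(0,t)^{p+2}$ (you sharpen this to $O(\varepsilon^2)$ by using $|S|\geq 2$), so that the leading term $\frac{1}{p}\hat{k}(0,t)^{p+2}$ dominates and $\frac{d}{dt}\hat{k}(0,t)>0$. The paper simply asserts this bound without details, whereas you spell out the partition by the nonzero-index set $S$ and the summability issue arising from the $(p-1)q_1q_2$ weight; that analysis is essentially what the paper carries out explicitly in the proof of the next lemma.
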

\begin{proof}
   From the hypothesis of the lemma,

	\[
	U:=\sum_{\mathbf{q}\in\mathcal{A}_0\cap \Zel^{p+2}}H\left(p,q_1,q_2\right)\kst{p+2}\left(\mathbf{q},t\right)=O\left(\delta\hat{k}(0,t)^{p+2}\right),
	\]
	and the implicit constnat in the big $O$ notation does not depend on $\mathcal{Z}$.
	Hence, for $\delta>0$ small enough, 
	the term $\dfrac{1}{p}\hat{k}\left(0,t\right)$ dominates the the term $U$ in differential equation for $\hat{k}\left(0,t\right)$. 
	This implies that $\dfrac{d}{dt}\hat{k}\left(0,t\right)>0$, and the
	conclusion of the lemma follows.
	
\end{proof}

Now we show some control estimates for the Fourier modes,

\begin{lema}
	There is a  $0<\delta <\frac{1}{4}$ such that if the initial condition $\psi$ of (\ref{eq:bvp}) satisfies:
	\[2\delta\cdot\hat{\psi}(0)\geq |\hat{\psi}(\pm 1)|\quad\text{and}\quad \delta\cdot\hat{\psi}(0)\geq q^2 |\hat{\psi}(q)|\quad\text{for}\quad \left|q\right|\geq 2\]
	
	holds, and for $t\in (0,\tau)$
	
	\[2\delta\cdot\hat{\psi}(0)\geq|\hat{k}(\pm 1,t)|,\]
	
	and
	
	\[\hat{k}(0,t)\geq \left(1-\delta\right)\hat{\psi}(0).\]
	
	Then
	\[\delta\cdot\hat{\psi}(0)\geq q^2|\hat{k}(q,t)|.\]
	\label{l:c1}
\end{lema}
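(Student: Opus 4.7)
The plan is a continuity/barrier argument on the time interval where the bound already holds. Define
\[
T^{*} \;=\; \sup\bigl\{\,t\in(0,\tau)\,:\,q^{2}|\hat{k}(q,s)|\leq \delta\hat{\psi}(0)\text{ for every }s\in[0,t]\text{ and every }|q|\geq 2\bigr\}.
\]
By the hypothesis on the initial data and the continuity of the solution to (\ref{eq:efourier}), $T^{*}>0$. Assume for contradiction that $T^{*}<\tau$. Since $\mathcal{Z}$ is finite there is some $|q_{0}|\geq 2$ for which $q_{0}^{2}|\hat{k}(q_{0},T^{*})|=\delta\hat{\psi}(0)$, and hence
\[
\left.\frac{d}{dt}|\hat{k}(q_{0},t)|^{2}\right|_{t=T^{*}}\;\geq\;0.
\]
I would reach a contradiction by computing this derivative from (\ref{eq:efourier}) and showing it is strictly negative.

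The linear part of the ODE contributes $2\bigl(\tfrac{p+2}{p}-q_{0}^{2}\bigr)\hat{k}(0,T^{*})^{p+1}|\hat{k}(q_{0},T^{*})|^{2}$; for $|q_{0}|\geq 2$ and $p\geq 1$ this is bounded above by $-c\,q_{0}^{2}\hat{k}(0,T^{*})^{p+1}|\hat{k}(q_{0},T^{*})|^{2}$, which at the critical instant equals $-c\,\delta^{2}\hat{\psi}(0)^{2}\hat{k}(0,T^{*})^{p+1}/q_{0}^{2}$. The nonlinear part is $2\,\Re\bigl(\overline{\hat{k}(q_{0},T^{*})}\,S_{q_{0}}\bigr)$, where
\[
S_{q_{0}}\;=\;\sum_{\mathbf{q}\in\mathcal{A}_{q_{0}}\cap\Zel^{p+2}}H(p,q_{1},q_{2})\,\kst{p+2}(\mathbf{q},T^{*}).
\]
The crux of the proof is the uniform estimate $|S_{q_{0}}|\leq C\,\delta^{2}\hat{\psi}(0)^{2}\hat{k}(0,T^{*})^{p}$, with $C$ independent of $q_{0}$ and of $\mathcal{Z}$. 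For this I would combine: (i) the running bounds $|\hat{k}(\pm 1,t)|\leq 2\delta\hat{\psi}(0)$ and $|\hat{k}(q,t)|\leq \delta\hat{\psi}(0)/q^{2}$ for $|q|\geq 2$, so every nonzero-index factor in the product has size $O(\delta\hat{\psi}(0))$ with $\ell^{1}$-summable weights $1/q_{i}^{2}$; (ii) the defining property of $\mathcal{A}_{q_{0}}$, which forces at least two of the $q_{i}$ to be nonzero and supplies the extra factor $\delta^{2}$; (iii) the polynomial bound $|H(p,q_{1},q_{2})|\leq C(1+q_{1}^{2}+|q_{1}q_{2}|)$, which is absorbed by the $1/q_{i}^{2}$ decay so the sum converges uniformly in $\mathcal{Z}$ and $q_{0}$.

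Combining, and using $\hat{k}(0,T^{*})\geq(1-\delta)\hat{\psi}(0)$,
\[
\left.\frac{d}{dt}|\hat{k}(q_{0},t)|^{2}\right|_{t=T^{*}} \;\leq\; \frac{\delta^{2}\hat{\psi}(0)^{2}\hat{k}(0,T^{*})^{p}}{q_{0}^{2}}\Bigl(-c\,\hat{k}(0,T^{*})+C\,\delta\,\hat{\psi}(0)\Bigr),
\]
which is strictly negative once $\delta$ is small enough, contradicting $T^{*}<\tau$. The hardest step is (iii): one has to split the sum over $\mathcal{A}_{q_{0}}$ according to how many $q_{i}$ lie in $\{-1,0,+1\}$ versus $\{|q|\geq 2\}$, exploit that the quadratic kernel $H$ involves only $q_{1},q_{2}$ while the convolution couples all $p+2$ indices through $q_{p+2}=q_{0}-q_{1}-\cdots-q_{p+1}$, and verify that the constant $C$ is independent of $q_{0}$ and of $\mathcal{Z}$ so the estimate survives passage to the full system (\ref{eq:bvp}).
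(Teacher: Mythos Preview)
Your proposal is correct and follows essentially the same route as the paper. The paper phrases the argument as showing that the ratio $R_n=|\hat{k}(n,t)|/\hat{\psi}(0)$ is nonincreasing, splitting the nonlinear sum into the pieces over $\mathcal{C}_n$ and $\mathcal{A}_n\setminus\mathcal{C}_n$ and bounding each so that the linear term $(\tfrac{p+2}{p}-n^2)\hat{k}(0,t)^{p+1}$ dominates for $\delta$ small; your explicit continuity/barrier argument at the first touching time is a cleaner formalisation of exactly that computation, with your bound $|S_{q_0}|\le C\delta^{2}\hat\psi(0)^{2}\hat k(0,T^{*})^{p}$ playing the role of the paper's estimates on $|\hat k(n,t)|\,|B_i|$.
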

\begin{proof}Let us consider the quantity $R_n=\left|\dfrac{\kf}{\hat{\psi}(0)}\right|,$ and we prove that is nonincreasing for $n$ fixed. 
	
	We compute:
	
	\[\dif{}\log R_n=\left(\frac{p+2}{p}-n^2\right)\kfc^{p+1}+\sum_{i=1}^{2}B_i\]
	
	where the $B_i$ terms are given by:
	
	\begin{align*}
	B_1&=\frac{1}{\kf}\sum_{\mathbf{q}\in\mathcal{C}_n\cap \Zel^{p+2}}H\left(p,q_1,q_2\right)\kst{p+2}\left(\mathbf{q},t\right)\\
	B_2&=\frac{1}{\kf}\sum_{\mathbf{q}\in\mathcal{A}_n\setminus \mathcal{C}_n\cap \Zel^{p+2}}H\left(p,q_1,q_2\right)\kst{p+2}\left(\mathbf{q},t\right)\\
	\end{align*} 
	
	We bound $B_1$, 
	
	\begin{eqnarray*}
	\left|\kf\right|\left|B_1\right|&\leq&\frac{1}{p}\sum_{\mathbf{q}\in\mathcal{C}_n\cap \Zel^{p+2}}\left|\kst{p+2}\left(\mathbf{q},t\right)\right|\\
	&&+(p-1)\sum_{\mathbf{q}\in\mathcal{C}_n\cap \Zel^{p+2}}\left|q_1\right| \left|q_2\right|\left|\kst{p+2}\left(\mathbf{q},t\right)\right| \\
	&&+\sum_{\mathbf{q}\in\mathcal{C}_n\cap \Zel^{p+2}}q_1^2\left|\kst{p+2}\left(\mathbf{q},t\right)\right|
	\end{eqnarray*}
	Now if $\mathbf{q}\in\mathcal{C}_n\cap \Zel^{p+2}$, then 
	
	\[\left|\kst{p+2}\left(\mathbf{q},t\right)\right|=\frac{2^{p+2}\delta^{p+2}\hat{\psi}(0)^{p+2}}{q_1^2\cdots q_{p+1}^2\left(n-q_1-\cdots-q_{p+1}\right)^2},\]
	and hence
	
	\[\left|\kf\right|\left|B_1\right|\leq\delta^{p+2} C'_{p}\hat{\psi}(0)^{p+2},\]
	with $C'_p$ independent of $\mathcal{Z}$.
	Since $|\kf|=|\kfc|/n^2$, we get 
	
	\begin{equation}
	\left|B_1\right|\leq \delta^{p+2}C'_{p}n^2\hat{\psi}(0)^{p+1}.
	\label{eq:1estim}
	\end{equation}
	
	Splitting the sums and using similar calculations as in (\ref{eq:1estim}), we obtain
	
	\begin{equation}
	\left|B_2\right|\leq \delta C''_{p}n^2\hat{\psi}(0)^{p+1},
	\label{eq:3-4estim}
	\end{equation}
	and again $C''_{p}$ is independent of $\mathcal{Z}$.
	
	Since the sum of the absolute value of all these terms can be made smaller than $$\left(n^2 -\dfrac{p+2}{p}\right)\hat{k}(0,t)^{p+1},$$
	for $n\geq 2$
	by taking $\delta>0$ small enough, then the $R_m$ term is non increasing for $\delta>0$ small enough. From this
	the conclusion of the lemma follows.
	\end{proof}

As we have been doing so far, in what follows the wave numbers are restricted to a fixed but arbitrary set $\mathcal{Z}$, 
so keep this in mind.
And, as announced at be beginning of this section, since the estimates are independent of
$\mathcal{Z}$, a limiting procedure will give the result for when we take as $\mathcal{Z}$ the whole set of integers.

\begin{lema}
\label{controllingsmallwavenumbers}
	Let $\psi$ be such that $Q\left(\psi\right)=0$. There is a $0<\delta'<\frac{1}{4}$ such that if $0<\delta\leq \delta'$ 
	and $\delta\hat{\psi}\left(0\right)\geq \left|\hat{\psi}\left(\pm 1\right)\right|$ 
	then whenever $\left|\kf\right|n^2\leq \delta \kfc$ for all $\left|n\right|\geq 2$, for all $t\in\left[0,\tau\right]$,
	then we also have  $\left|\hat{k}\left(q,t\right)\right|q^2\leq \delta \kfc$ for $q=\pm 1$ on the same time interval.
	\label{le:acote}
\end{lema}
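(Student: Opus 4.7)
The central idea is to exploit the closure identity $Q(k(\cdot,t))=0$, which is inherited from $Q(\psi)=0$ along the flow (since the evolving curve remains closed); this identity is equivalent to the vanishing of the Fourier coefficient of $1/k$ at index $-1$ (and at $+1$ by reality of $k$). I would write $r(\theta,t)=k(\theta,t)-\hat{k}(0,t)$, so that $\hat{r}(n)=\hat{k}(n,t)$ for $n\neq 0$ and $\hat{r}(0)=0$. Provided $\|r\|_\infty/\hat{k}(0,t)<1$, the geometric expansion
\[
\frac{1}{k}=\frac{1}{\hat{k}(0,t)}\sum_{j=0}^{\infty}\left(-\frac{r}{\hat{k}(0,t)}\right)^{j}
\]
converges in $L^\infty$. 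Taking the Fourier coefficient at $-1$ and using $Q(k)=0$ isolates the $j=1$ term, yielding
\[
\hat{k}(-1,t)=\sum_{j=2}^{\infty}\frac{(-1)^{j}}{\hat{k}(0,t)^{j-1}}\widehat{r^{j}}(-1).
\]

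The key quantitative step is to bound $|\widehat{r^{j}}(-1)|$. Since $\widehat{r^{j}}$ is the $j$-fold convolution of $\hat{r}$ with itself, the triangle inequality gives the crude but sufficient bound $|\widehat{r^{j}}(-1)|\leq\|\hat{r}\|_{\ell^{1}}^{j}$. Setting $\beta(t)=|\hat{k}(\pm 1,t)|/\hat{k}(0,t)$ (with the two values coinciding by reality of $k$) and $M(t)=\sum_{n\neq 0}|\hat{k}(n,t)|/\hat{k}(0,t)$, the hypothesis $|\hat{k}(n,t)|n^{2}\leq\delta\hat{k}(0,t)$ for $|n|\geq 2$ gives $M(t)\leq 2\beta(t)+c\delta$ with $c=2(\pi^{2}/6-1)$. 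Substituting back produces the self-improving inequality
\[
\beta(t)\leq\frac{M(t)^{2}}{1-M(t)}\leq\frac{(2\beta(t)+c\delta)^{2}}{1-2\beta(t)-c\delta},
\]
valid whenever $M(t)<1$.

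The argument is closed by a continuity bootstrap in $t$. At $t=0$ the hypothesis $\delta\hat{\psi}(0)\geq|\hat{\psi}(\pm 1)|$ yields $\beta(0)\leq\delta$. Choose $\delta'>0$ so small that $\delta\leq\delta'$ implies both $(2+c)\delta<1/2$ and $2(2+c)^{2}\delta^{2}<\delta$; then whenever $\beta(t)\leq\delta$ one has $M(t)<1/2$ (which in particular legitimises the series expansion, since $\|r\|_\infty\leq\|\hat{r}\|_{\ell^{1}}=M(t)\hat{k}(0,t)<\hat{k}(0,t)$), and the right-hand side of the self-improving inequality is strictly less than $\delta$. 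By smoothness of solutions of (\ref{eq:efourier}) the function $\beta(\cdot)$ is continuous, so the subset of $[0,\tau]$ where $\beta(t)\leq\delta$ is both open and closed, contains $0$, and therefore equals $[0,\tau]$.

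The main delicacy lies in running this bootstrap cleanly and in verifying that all constants are independent of $\mathcal{Z}$; otherwise the computation is purely algebraic. The essential content is simply that the single complex identity $Q(k)=0$ supplies exactly one linear relation among the Fourier coefficients, which is precisely what is needed to solve for $\hat{k}(\pm 1,t)$ in terms of the higher wavenumbers that the hypothesis already controls.
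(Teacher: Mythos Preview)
Your proof is correct and follows essentially the same route as the paper: both expand $1/k$ as a geometric series in $r/\hat{k}(0,t)$, use $Q(k)=0$ to force $\widehat{(1/k)}(-1)=0$, isolate $\hat{k}(-1,t)$, and bound the remaining convolution sums to obtain a self-improving inequality closed by continuity. Your bookkeeping via $M=\|\hat r\|_{\ell^1}/\hat k(0,t)$ and the explicit clopen argument are in fact a bit tidier than the paper's multinomial splitting of $S_m$, but the substance is identical.
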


\begin{proof}
Since we have that  $\delta\hat{\psi}\left(0\right)\geq \left|\hat{\psi}\left(\pm 1\right)\right|$, we can choose a $\tau'\in \left[0,\tau\right]$
such that $\left|\hat{k}\left(\pm 1,t\right)\right|\leq 2\delta \kfc$ on $\left[0,\tau'\right]$
(remember we are working with an arbitrary but final dimensional approximation of the $p$-curve shortening flow).
	We have the following identity
	
		\[\frac{1}{k\left(\theta,t\right)}=\frac{1}{\kfc}\frac{1}{1+\sum_{q\neq 0}\frac{\hat{k}(q,t)}{\kfc}e^{iq\theta}}=\frac{1}{\kfc}\frac{1}{1+z}=\frac{1}{\kfc}\sum_{n=0}^{\infty}(-1)^n z^n,\]
	where $z=z(\theta,t)=\sum_{q\neq 0}\frac{\hat{k}(q,t)}{\kfc}e^{iq\theta}$. It can be easily seen that for the Fourier modes of $z$ we have:
	
	\[\displaystyle{\zf{p}=\begin{cases}
	0&\text{ if } p=0\\
	\frac{\hat{k}(p,t)}{\kfc}&\text{ otherwise}
	\end{cases}},\] 

    Now taking the Fourier transform, this implies
    
    \[\widehat{\left(\frac{1}{k}\right)}(-1,t)=\frac{1}{\kfc}\left(-\zf{-1}+\sum_{m=2}^{\infty}(-1)^m\sum_{q_1+\cdots+q_m=-1} \zf{q_1}\cdots\zf{q_m}\right)\]
    
   Since $k$ is the 
    curvature of a convex curve, we have $Q\left(k\right)=0$, so
    
    \[\widehat{\left(\frac{1}{k}\right)}(-1,t)=\int_{\S^1}\frac{e^{-(-1)i\theta}}{k\left(\theta,t\right)}\,d\theta=Q(k)=0,\]
    
    then 
    
    \[\left|\zf{-1}\right|=\left|\sum_{m=2}^{\infty}(-1)^m\sum_{q_1+\cdots+q_m=-1} \zf{q_1}\cdots\zf{q_m}\right|\]
    
    In order to estimate the sum in the right side, let us notice that
    
\begin{align*}
    \sum_{q_1+\cdots+q_m=-1} \zf{q_1}\cdots\zf{q_m}&=\sum_{j=0}^{m-1}\binom{m}{j}\zf{1}^j\sum_{l=0}^{m-j-1}\binom{m-j}{l}\zf{-1}^j\\
                          &\sum_{q_{j+l+1}+\cdots+q_m=-1-j+l} \zf{q_{j+l+1}}\cdots\zf{q_m}\\
    \end{align*}
    
    Now we proceed to estimate $\displaystyle{S_m=\sum_{q_{1}+\cdots+q_m=-1} \zf{q_{1}}\cdots\zf{q_m}}$,
    
    \begin{align*}
    \left|S_m\right|&\leq\sum_{j=0}^{m-1}\binom{m}{j}\left|\zf{1}\right|^j\sum_{l=0}^{m-j-1}\binom{m-j}{l}\left|\zf{-1}\right|^l\sum_{\substack{q_{j+l+1}+\cdots+q_m\\=-1-j+l}} \delta^{m-j-l}\frac{1}{q_{j+l+1}^2}\cdots\frac{1}{q_{m}^2}\\
    &\leq\sum_{j=0}^{m}\binom{m}{j}\left|\zf{1}\right|^j\sum_{l=0}^{m-j}\binom{m-j}{l}\left|\zf{-1}\right|^l\delta^{m-j-l}C_1^{m-j-l}\\
    &=\left(\left|\zf{1}\right|+\left|\zf{-1}\right|+\delta C_1\right)^m,
    \end{align*}
    where $C_1$ is a constant independent of $\mathcal{Z}$ and $\delta$.
    
    As we have that  $\left|\zf{\pm1}\right|\leq 2\delta$,  we get 
    
    \[\left|S_m\right|\leq \left(\left|\zf{1}\right|+\left|\zf{-1}\right|+\delta C_1\right)^m\leq \delta^m\left(4+C_1\right)^m.\]
    
    Therefore 
    
    \[\left|\zf{-1}\right|\leq\frac{\delta^2(4+C_1)^2}{1-\delta(4+C_1)}\leq \delta\]
    
    as long as $\delta\leq \frac{1}{2\left(4+C_1\right)}$.    
\end{proof}

\begin{lema}
There is a $0<\delta<\frac{1}{4}$, independent of $\mathcal{Z}$, such that if the initial condition $\psi$ of (\ref{eq:bvp}) satisfies $Q(\psi)=0$ and 

\[\delta\cdot\hat{\psi}(0)\geq q^2|\hat{\psi}(q)|,\]

then for all times $t$ (as long as the solution to (\ref{eq:efourier}) exists),

\[\delta\psi(0)\geq q^2\left|\hat{k}(q,t)\right|.\]

\end{lema}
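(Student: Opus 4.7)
The plan is a continuity argument that chains together Lemmas \ref{l:tec}, \ref{l:c1} and \ref{le:acote}. Set
\[
\tau^{\ast} \;=\; \sup\bigl\{\,t>0\;:\;\delta\,\hat{\psi}(0)\geq q^{2}\,|\hat{k}(q,s)|\text{ for every }q\neq 0\text{ and every }s\in[0,t]\,\bigr\},
\]
where the supremum is taken over the maximal interval on which the solution to (\ref{eq:efourier}) exists. At $t=0$ the hypothesis of the lemma gives the bound, and smoothness of the solution gives $\tau^{\ast}>0$; the goal is to show that $\tau^{\ast}$ is the right endpoint of the existence interval.

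Assume otherwise. On $[0,\tau^{\ast}]$ the running bound yields both $q^{2}|\hat{k}(q,t)|\leq 2\delta\,\hat{\psi}(0)$ and $|\hat{k}(\pm 1,t)|\leq 2\delta\,\hat{\psi}(0)$, so Lemma \ref{l:tec} forces $\hat{k}(0,\cdot)$ to be non-decreasing; hence $\hat{k}(0,t)\geq\hat{\psi}(0)\geq(1-\delta)\hat{\psi}(0)$. With those two facts in hand the hypotheses of Lemma \ref{l:c1} are satisfied, and we conclude $q^{2}|\hat{k}(q,t)|\leq\delta\,\hat{\psi}(0)$ for all $|q|\geq 2$ on $[0,\tau^{\ast}]$; using $\hat{k}(0,t)\geq\hat{\psi}(0)$ this strengthens to $q^{2}|\hat{k}(q,t)|\leq\delta\,\hat{k}(0,t)$ for $|q|\geq 2$. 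Feeding this into Lemma \ref{le:acote}, whose key input $Q(k)=0$ is preserved along the flow because $k$ remains the curvature of a convex curve, yields $|\hat{k}(\pm 1,t)|\leq\delta\,\hat{k}(0,t)$ on the same interval.

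To turn the bounds into a contradiction I need strict inequality at $\tau^{\ast}$, since continuity of $\hat{k}(q,\cdot)$ then extends the estimate slightly beyond $\tau^{\ast}$. For $|n|\geq 2$ the proof of Lemma \ref{l:c1} actually shows that $R_{n}(t)=|\hat{k}(n,t)|/\hat{\psi}(0)$ is non-increasing, so $n^{2}|\hat{k}(n,t)|\leq n^{2}|\hat{\psi}(n)|$ is strict whenever the initial datum is non-degenerate (a density/approximation step handles the equality case). For $|n|=1$ the improvement from the working bound $|\hat{k}(\pm 1,t)|\leq 2\delta\,\hat{k}(0,t)$ to the conclusion $|\hat{k}(\pm 1,t)|\leq\delta\,\hat{k}(0,t)$ inside Lemma \ref{le:acote} supplies exactly the slack needed. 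This contradicts the definition of $\tau^{\ast}$, so the bound persists as long as the solution exists; a final limiting argument in $\mathcal{Z}$ transfers the conclusion from the finite-dimensional approximations to the full system (\ref{eq:bvp}).

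The main obstacle is the $q=\pm 1$ Fourier modes: at $n=\pm 1$ the linearisation of (\ref{eq:efourier}) carries the positive coefficient $(p+2)/p-1=2/p$, so no $R_{n}$-monotonicity argument of the sort used for $|n|\geq 2$ can work. Control of these modes must go through the nonlocal identity $Q(k)=0$ via Lemma \ref{le:acote}, and the only delicate bookkeeping is keeping the chain self-consistent: Lemma \ref{le:acote} needs control of the $|q|\geq 2$ modes in terms of $\hat{k}(0,t)$, which in turn requires the $\pm 1$ control in terms of $\hat{\psi}(0)$ inside Lemma \ref{l:c1}, which is precisely the quantity the bootstrap is propagating.
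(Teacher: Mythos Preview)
Your proposal is correct and follows essentially the same continuity/bootstrap scheme as the paper, chaining Lemmas~\ref{l:tec}, \ref{l:c1}, and~\ref{le:acote} in the same order and for the same reasons. The one difference worth noting is that the paper defines the maximal interval using the \emph{weaker} bound $2\delta\,\hat{\psi}(0)\geq q^{2}|\hat{k}(q,t)|$ (together with $\hat{k}(0,t)\geq(1-\delta)\hat{\psi}(0)$) and then shows the stronger $\delta$-bound holds throughout; this builds in automatic slack at the endpoint and makes the extension past $\tau$ immediate. Your variant defines $\tau^{\ast}$ with the $\delta$-bound itself, which forces you to manufacture strict inequality at $\tau^{\ast}$; the ``density/approximation'' detour you propose for $|n|\geq 2$ is unnecessary, since the \emph{hypotheses} of Lemma~\ref{l:c1} (the $2\delta$ bound on $\hat{k}(\pm 1,\cdot)$ and the $(1-\delta)$ lower bound on $\hat{k}(0,\cdot)$) are satisfied strictly at $\tau^{\ast}$ and hence persist slightly beyond by continuity, so the $\delta$-conclusion of that lemma extends automatically.
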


\begin{proof}
There exists a $\tau>0$ be such that the interval $[0,\tau]$ is maximal with respect to the following property: for all $t\in[0,\tau]$ we have

\[2\delta\cdot\hat{\psi}(0)\geq q^2|\hat{k}(q,t)|,\]

and

\[\hat{k}(0,t)\geq (1-\delta)\hat{\psi}(0).\]

Now applying Lemmas \ref{l:tec} and \ref{l:c1}, we obtain that

\[\delta\psi(0)\geq q^2\left|\hat{k}(q,t)\right|,\]

whenever $\left|q\right|\geq 2$. Now Applying Lemma \ref{le:acote}, we get that

\[\delta\psi(0)\geq\left|\hat{k}(\pm 1,t)\right|, \]

when $t\in [0,\tau]$. Hence we have that  $\delta\psi(0)\geq q^2\left|\hat{k}(q,\tau)\right|$ and if we apply the same arguments as before we can show that there is a $\tau_1>\tau$ such that if $t\in\left[0,\tau_1\right]$ then $\delta\cdot\hat{\psi}(0)\geq q^2\left|\hat{k}(q,t)\right|$, contradicting the maximality of $\left[0,\tau\right]$.
\end{proof}

For $\delta>0$ small enough, assuming that for the initial condition we have
$\delta \hat{\psi}\left(0\right)\geq q^2|\hat{\psi}(q)|$, we have now control over all the
Fourier wavenumbers of the solution. The arguments in \cite{CortissozMurcia} now apply almost verbatim: see the upcoming sections.

\subsection{Decay of the Fourier wavenumbers}
\label{wavenumberscontrolled}

Again, all the estimates proved in this section are valid for any choice of $\mathcal{Z}$, and are also independent of the choice. 
Our main purpose is to show that the Fourier wavenumbers $\hat{k}\left(n,t\right)$, $n\neq 0$, go to 0 as
$t\rightarrow T$.
To begin, we have, as in \cite{CortissozMurcia}, the Trapping Lemma (Lemma 3.2 in \cite{CortissozMurcia}).
Keep in mind that we are always under the assumption that $\psi>0$ is the curvature function of a simple convex closed curve (or equivalently, the 
identity $Q\left(\psi\right)=0$ holds).

\begin{theorem}
[Trapping Lemma]
There exists a constant $c_p>0$ independent of the choice of $\Zel$ such that if the initial datum $\psi$ satisfies the following inequality:

\[\hat{\psi}(0)\geq c_p\norma{\psi}{2},\]

then there exists a $\gamma>0$ that depends on $\psi$ such that the solution to (\ref{eq:efourier}) satisifies:

\begin{equation}
\left|\kf\right|\leq \frac{\hat{\psi}(0)e^{-\gamma|n|t}}{c_p|n|^2},\quad n\neq 0.
\end{equation}
\end{theorem}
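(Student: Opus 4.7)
The plan is a barrier (trapping) argument in the spirit of Lemma~3.2 in \cite{CortissozMurcia}. First, choosing $c_p$ large enough, the hypothesis $\hat\psi(0)\geq c_p\norma{\psi}{2}$ translates via the definition of $\norma{\cdot}{2}$ into the smallness assumption $\delta\hat\psi(0)\geq q^2|\hat\psi(q)|$ required by the previous lemma. That lemma, together with Lemma~\ref{l:tec}, yields the two uniform, $\mathcal{Z}$-independent bounds
\[
|\hat k(q,t)|\leq\frac{\delta\hat\psi(0)}{q^2}\quad(q\neq 0),\qquad \hat k(0,t)\geq (1-\delta)\hat\psi(0),
\]
valid on the whole interval of existence of (\ref{eq:efourier}). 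These feed the exponential upgrade below.

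Next, introduce the candidate barrier $f_n(t):=\hat\psi(0)e^{-\gamma|n|t}/(c_p n^2)$, $n\neq 0$, and aim to show that the region $\{|\hat k(n,t)|\leq f_n(t):n\neq 0\}$ is forward invariant for $c_p$ large and $\gamma$ small. By continuity the barrier holds on a maximal interval $[0,t^*)$; suppose $t^*<\infty$ and that it is first touched at some $n_0\neq 0$. For $|n_0|\geq 2$ the linear term in (\ref{eq:efourier}) is strongly dissipative, bounded above by $-\tfrac12 n_0^2\hat\psi(0)^{p+1}(1-\delta)^{p+1}|\hat k(n_0,t^*)|$. For the nonlinear sum, every $\mathbf{q}\in\mathcal{A}_{n_0}$ has at least two nonzero coordinates summing to $n_0$, so $\sum_{q_i\neq 0}|q_i|\geq|n_0|$; substituting the barrier into $\prod|\hat k(q_i,t^*)|$ extracts a common factor $e^{-\gamma|n_0|t^*}$. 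The cancellations used to estimate $B_1$ and $B_2$ in the proof of Lemma~\ref{l:c1} — the $q_1^2$ and $q_1q_2$ appearing in $H(p,q_1,q_2)$ being absorbed by the $q_i^{-2}$ coming from the wavenumbers — produce a nonlinear contribution of order $C_p\hat\psi(0)^{p+2}e^{-\gamma|n_0|t^*}/c_p^{\,2}$. Choosing first $c_p$ large and then $\gamma$ small in terms of $\hat\psi(0)$, the total derivative is strictly smaller than $\frac{d}{dt}f_{n_0}(t^*)=-\gamma|n_0|f_{n_0}(t^*)$, contradicting the definition of $t^*$.

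For $n_0=\pm 1$ the linear coefficient $(p+2)/p-1=2/p$ is positive, so decay cannot come from the ODE itself. Here I would mimic Lemma~\ref{le:acote}: expanding $1/k$ as a geometric series in $z(\theta,t)=\sum_{q\neq 0}(\hat k(q,t)/\hat k(0,t))e^{iq\theta}$ and reading off its $(-1)$-Fourier coefficient, the identity $Q(k)=0$ gives
\[
\hat z(-1,t)=\sum_{m\geq 2}(-1)^{m+1}\sum_{q_1+\cdots+q_m=-1}\hat z(q_1,t)\cdots\hat z(q_m,t).
\]
Inserting the exponential barrier for $|q|\geq 2$ and the uniform bound for $q=\pm 1$, and using $\sum|q_i|\geq 1$ in every admissible configuration, the double sum is dominated by a geometric series of ratio $O(\delta)$, yielding $|\hat k(\pm 1,t^*)|\leq (C/c_p^{\,2})\hat\psi(0)e^{-\gamma t^*}$. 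For $c_p$ large this is strictly smaller than $f_{\pm 1}(t^*)=\hat\psi(0)e^{-\gamma t^*}/c_p$, so contact at $n_0=\pm 1$ is likewise impossible.

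The main obstacle is the coupling between the $n=\pm 1$ and $|n|\geq 2$ modes: the $\pm 1$ wavenumbers do not decay from the linear part, yet the nonlinear estimate for $|n|\geq 2$ requires an exponential bound at \emph{every} nonzero $q$. The resolution is to maintain the barrier simultaneously for all $n\neq 0$ but close it by two different mechanisms at the contact time — dissipation from the ODE for $|n|\geq 2$, and the algebraic identity $Q(k)=0$ for $n=\pm 1$. The remaining work (convergence of the $\sum 1/q_i^2$ sums and careful tracking of $C_p$) follows the same pattern as in the proof of Lemma~\ref{l:c1} and is routine once the cancellation structure inherited from $H(p,q_1,q_2)$ is isolated; passing to the limit in $\mathcal{Z}$ is then automatic since every constant has been kept independent of $\mathcal{Z}$.
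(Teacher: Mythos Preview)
Your approach is exactly what the paper has in mind: it does not give a self-contained proof of the Trapping Lemma but simply invokes Lemma~3.2 of \cite{CortissozMurcia}, having first established (via Lemmas~\ref{l:tec}--\ref{le:acote} and the lemma following them) the uniform control $q^2|\hat k(q,t)|\leq \delta\hat\psi(0)$, with the $\pm 1$ wavenumbers closed through the identity $Q(k)=0$. Your barrier argument for $|n|\geq 2$ together with the $Q(k)=0$ mechanism for $n=\pm 1$ is precisely this combination, so the strategy matches.

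Two small points to tighten. First, in your nonlinear estimate for $|n_0|\geq 2$ you write the contribution as $C_p\hat\psi(0)^{p+2}e^{-\gamma|n_0|t^*}/c_p^{\,2}$, but the $\mathbf{q}\in\mathcal A_{n_0}$ with $p$ zero entries carry a factor $\hat k(0,t^*)^{p}$, which is \emph{not} bounded by $\hat\psi(0)^{p}$ (indeed $\hat k(0,t)$ blows up). The fix is harmless: the linear term carries $\hat k(0,t^*)^{p+1}$, so after dividing through the ratio is $O\big(\hat\psi(0)/(c_p\hat k(0,t^*))\big)\leq O(1/c_p)$ by the lower bound $\hat k(0,t^*)\geq (1-\delta)\hat\psi(0)$, and the barrier still closes. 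Second, in the $n_0=\pm 1$ step you say you insert ``the uniform bound for $q=\pm 1$''; in fact you should (and may) insert the barrier bound $|\hat z(\pm 1,t^*)|\leq e^{-\gamma t^*}/c_p$, which holds with possible equality at $t^*$ --- otherwise the purely $\pm 1$ configurations in the $m\geq 3$ sums carry no exponential factor and you cannot extract $e^{-\gamma t^*}$. With these two corrections your sketch goes through and coincides with the argument the paper defers to.
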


Also, in the same way as Lemmas 3.3 and 3.4 are obtained in \cite{CortissozMurcia}, we have a 
Blow-up Lemma.
\begin{lema}[Blow-up] 
There is a $c_p>0$ (the same as in the Trapping Lemma) such that if the initial condition $\psi$ of (\ref{eq:bvp}) satisfies 
\[\cdot\hat{\psi}(0)\geq c_p\left\|\hat{\psi}\right\|_2,\]
then there are constants a number $c,c'>0$ such that:

\begin{equation}
\frac{c}{T-t}\leq \kfc^{p+1}\leq \frac{c'}{T-t}.
\label{eq:blowup1}
\end{equation}

\end{lema}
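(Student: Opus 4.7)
The plan is to view the first equation of (\ref{eq:efourier}),
\[
\frac{d}{dt}\kfc = \frac{1}{p}\kfc^{p+2} + U(t), \qquad U(t)=\!\sum_{\mathbf{q}\in\mathcal{A}_0\cap\Zel^{p+2}}\! H(p,q_1,q_2)\kst{p+2}(\mathbf{q},t),
\]
as a perturbation of the scalar blow-up ODE $\dot u = u^{p+2}/p$, whose solutions satisfy $u(t)^{p+1}=p/((p+1)(T_*-t))$ at the singular time $T_*$. The main task is to show that, once the Trapping Lemma is in force, $U(t)$ is of strictly lower order than $\kfc^{p+2}$.

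Every $\mathbf{q}\in\mathcal{A}_0$ has at least two nonzero coordinates, and for those the Trapping Lemma supplies $|\hat{k}(n,t)|\leq \hat{\psi}(0)/(c_p n^2)$. Pairing the $q_1^2$ and $|q_1||q_2|$ factors in $|H(p,q_1,q_2)|\leq q_1^2+(p-1)|q_1||q_2|+1/p$ against the $1/q_i^2$ decay (as in the proof of Lemma \ref{l:c1}), the sums collapse to absolutely convergent series of the form $\sum 1/n^2$, with bounds independent of $\Zel$. Since each tuple in $\mathcal{A}_0$ has at most $p$ zero coordinates, each summand contributes at most $p$ factors of $\kfc$, so one obtains a $\Zel$-independent constant $M$ with $|U(t)|\leq M\,\kfc^{p}$. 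Because the nonzero Fourier modes stay uniformly bounded while $k(\theta,t)$ develops a singularity at $T$, the zeroth mode $\kfc$ must itself blow up as $t\to T$; consequently, for any $\varepsilon>0$ there is $t_0<T$ such that
\[
\left(\frac{1}{p}-\varepsilon\right)\kfc^{p+2}\leq \frac{d}{dt}\kfc \leq \left(\frac{1}{p}+\varepsilon\right)\kfc^{p+2} \quad \text{for } t\in[t_0,T).
\]

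Dividing by $\kfc^{p+2}$ and recognizing the middle expression as $-\frac{1}{p+1}\frac{d}{dt}\kfc^{-(p+1)}$, integrating from $t$ to $t'\in(t,T)$ and letting $t'\to T^-$ (so that $\kfc(t')^{-(p+1)}\to 0$) gives
\[
\left(\frac{p+1}{p}-(p+1)\varepsilon\right)(T-t) \leq \kfc^{-(p+1)} \leq \left(\frac{p+1}{p}+(p+1)\varepsilon\right)(T-t)
\]
on $[t_0,T)$; taking reciprocals yields the claimed two-sided bound $c/(T-t)\leq \kfc^{p+1}\leq c'/(T-t)$ for suitable $c,c'>0$. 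On the compact interval $[0,t_0]$ the function $\kfc$ is bounded below by $\hat{\psi}(0)$ by monotonicity (Lemma \ref{l:tec}) and above by the $\Zel$-independent solvability of (\ref{eq:efourier}) on $[0,t_0]\subset[0,T)$, while $T-t$ is pinned between $T-t_0>0$ and $T$; so the estimate extends to $[0,T)$ after shrinking $c$ and enlarging $c'$. The main obstacle is the first estimate: obtaining $|U(t)|\leq M\kfc^p$ with $M$ independent of $\Zel$ requires careful bookkeeping of how the polynomial factors in $H(p,q_1,q_2)$ are absorbed by the $n^{-2}$ decay from the Trapping Lemma, essentially repeating the technique of Lemma \ref{l:c1}.
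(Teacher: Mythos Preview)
Your proposal is correct and follows the same route the paper indicates (it defers this lemma to Lemmas 3.3--3.4 of \cite{CortissozMurcia}, and the very computation you sketch reappears verbatim in the paper's own Lemma \ref{lem:blowup0}): bound the off-diagonal sum $U(t)$ by $M\kfc^{p}$ via the Trapping Lemma, sandwich $\frac{d}{dt}\kfc$ between two multiples of $\kfc^{p+2}$, and integrate $-\frac{1}{p+1}\frac{d}{dt}\kfc^{-(p+1)}$ from $t$ to $T$. The only point worth tightening is your sentence ``$k(\theta,t)$ develops a singularity at $T$'': for the finite-dimensional system (\ref{eq:efourier}) this is not given a priori but is itself a consequence of the lower differential inequality $\frac{d}{dt}\kfc\geq c\,\kfc^{p+2}$ (with $c>0$ once $\delta$ is small), together with $\kfc(0)=\hat\psi(0)>0$; stating it this way avoids any appearance of circularity.
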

From now on, we assume that $\psi$ satisfies
\[\hat{\psi}(0)\geq c_p\norma{\psi}{2},\]
where $c_p$ is such that the Trapping Lemma holds.

We also have a few important observations. First, integrating the ODE for $\kf$, we obtain

\begin{equation}
\kf=\hat{k}(n,\tau)e^{-\left(n^2-\frac{p+2}{p}\right)\int_{\tau}^t\hat{k}(0,\sigma)^{p+1}\,d\sigma}+\int_{\tau}^t h(s)e^{-\left(n^2-\frac{p+2}{p}\right)\int_{s}^t\hat{k}(0,\sigma)^{p+1}\,d\sigma}\,ds
    \label{eq:intfour}
\end{equation}

where $h(t)$ is given by:

\[h(t)=\sum_{\substack{\mathbf{q}\in\mathcal{A}_n\cap \Zel^{p+2}}}H\left(p,q_1,q_2\right)\Phi(\mathbf{q},t)\kst{p+2}\left(\mathbf{q},t\right),\]

Applying the \emph{Trapping Lemma}, we get

\[\left|h(t)\right|\leq C_p\kfc^{p}.\]

Also, from the Trapping Lemma, there exists $C,\mu>0,$ such that 
		
		\begin{equation}
		\left|\kf\right|\leq Ce^{-\left|\mu\right|},\quad\text{for}\quad t\geq \frac{T}{2},
		\label{eq:expbound}
		\end{equation}

We shall use these observations in proving the following decay (in time) estimates for the Fourier wave numbers of $k$.

	\begin{prop}
	\label{propfirstbound}
		There exists $\epsilon_0>0$ which depends on $p$, and a $\mu>0$ that depends also on $p$ and on $\psi$, such that if $t>\frac{T}{2}>0$ then there is a constant $b>0$ such that for any $0<\epsilon<\epsilon_0$, for $n\neq 0,\pm 1$, the following estimate holds for the solution of (\ref{eq:bvp}),
		
		\[\left|\kf\right|<be^{-\mu \left|n\right|}(T-t)^{\epsilon}\quad{whenever}\quad t>\frac{T}{2}.\] 
		
	\end{prop}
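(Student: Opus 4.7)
The plan is to work from the variation-of-parameters identity (\ref{eq:intfour}) with $\tau=T/2$ and bound the two resulting terms separately. The decisive structural input is that, for $|n|\geq 2$, the coefficient $n^2-\frac{p+2}{p}\geq \frac{3p-2}{p}$ is strictly positive, so the kernel
\[
\mathcal{E}(s,t):=\exp\left(-\left(n^2-\frac{p+2}{p}\right)\int_s^t \hat{k}(0,\sigma)^{p+1}\,d\sigma\right)
\]
really is a decay factor. The Blow-up Lemma in the form $\hat{k}(0,\sigma)^{p+1}\geq c/(T-\sigma)$ then gives $\mathcal{E}(s,t)\leq \left(\frac{T-t}{T-s}\right)^{c(n^2-(p+2)/p)}$, which is the mechanism for converting the exponential-in-$\tau$ decay provided by the Trapping Lemma into power-in-$(T-t)$ decay.

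For the homogeneous piece $\hat{k}(n,T/2)\,\mathcal{E}(T/2,t)$, the bound (\ref{eq:expbound}) gives $|\hat{k}(n,T/2)|\leq Ce^{-\mu|n|}$, so this term is controlled by $Ce^{-\mu|n|}(T-t)^{c(n^2-(p+2)/p)}$ up to a constant depending on $T$. For the inhomogeneous piece I propose to sharpen the bound $|h(s)|\leq C_p\hat{k}(0,s)^p$ stated after (\ref{eq:intfour}) to $|h(s)|\leq Ce^{-\mu|n|}\hat{k}(0,s)^p$. Granting this, the Blow-up Lemma gives $\hat{k}(0,s)^p\leq C'(T-s)^{-p/(p+1)}$, and changing variables $u=T-s$ yields
\[
\left|\int_{T/2}^t h(s)\,\mathcal{E}(s,t)\,ds\right|\leq Ce^{-\mu|n|}(T-t)^{c(n^2-(p+2)/p)}\int_{T-t}^{T/2}u^{-\frac{p}{p+1}-c(n^2-(p+2)/p)}\,du.
\]
Choosing the Blow-up constant $c$ close to its asymptotic value $p/(p+1)$, the total exponent of $u$ is at most slightly less than $-1$, so the integral is dominated by its behavior near the lower limit $u=T-t$ (with at worst a logarithm at the borderline $|n|=2,\;p=1$). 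The outcome is a bound of order $(T-t)^{1/(p+1)}$, and any logarithmic loss is harmlessly absorbed into $\epsilon<1/(p+1)$. Setting $\epsilon_0:=1/(p+1)$ and combining the two pieces yields $|\hat{k}(n,t)|\leq be^{-\mu|n|}(T-t)^\epsilon$ for every $\epsilon\in(0,\epsilon_0)$, with $b=b(\epsilon)$ absorbing the factor $(T/2)^{-c(n^2-(p+2)/p)}$ (uniformly bounded since $T/2$ can be assumed $\leq 1$ after enlarging the constants) together with the behavior near $t=T/2$.

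The main obstacle is justifying the refined estimate on $h(s)$. Recall
\[
h(s)=\sum_{\mathbf{q}\in \mathcal{A}_n\cap \mathcal{Z}^{p+2}}H(p,q_1,q_2)\,\hat{k}(q_1,s)\cdots\hat{k}(q_{p+2},s),
\]
and the defining property of $\mathcal{A}_n$ forces at least two of the $q_i$ to be nonzero. I would partition the sum according to the subset of zero entries; for each nonzero $q_i$ the Trapping Lemma together with (\ref{eq:expbound}) provides $|q_i|^2|\hat{k}(q_i,s)|\leq Ce^{-\mu|q_i|}$, which simultaneously (i) absorbs the quadratic weights in $|H(p,q_1,q_2)|\leq \frac{1}{p}+(p-1)|q_1q_2|+q_1^2$, (ii) leaves convergent factors of the form $\sum 1/q_i^2$, and (iii), through the triangle inequality $\sum_i|q_i|\geq|n|$, extracts the uniform factor $e^{-\mu|n|}$. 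The at most $p$ zero entries contribute at most $\hat{k}(0,s)^p$. The delicate bookkeeping lies in making sure the polynomial weights from $H$ are neutralized in every configuration (whether or not $q_1,q_2$ happen to be zero), and that a small fraction of the exponential decay (obtained by splitting $e^{-\mu|q_i|}=e^{-\mu|q_i|/2}e^{-\mu|q_i|/2}$) is reserved to guarantee absolute convergence of the constrained sum over $(q_1,\ldots,q_{p+1})$ in which $q_{p+2}=n-q_1-\cdots-q_{p+1}$ is not freely varying.
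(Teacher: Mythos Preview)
Your approach is essentially identical to the paper's: both plug into the Duhamel formula (\ref{eq:intfour}), use the Blow-up Lemma lower bound to turn the kernel into a power of $(T-t)/(T-s)$, partition $h(s)$ according to which entries $q_i$ vanish, and extract the $e^{-\mu|n|}$ factor from the nonzero entries via $\sum|q_i|\geq|n|$ (the paper is terser here, but this is exactly the mechanism behind its bound $|J_{i_1,\dots,i_l}|\leq Ce^{-\mu|n|}(T-s)^{-l/(p+1)}$). The paper starts the integral at $T-\delta$ rather than $T/2$ and writes $\epsilon_0=\min\{\tfrac12\alpha(2,p),\,1/(p+1)\}$, but these differences are cosmetic.

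One small slip to fix: your parenthetical that $(T/2)^{-c(n^2-(p+2)/p)}$ is ``uniformly bounded since $T/2$ can be assumed $\leq 1$'' is backwards---if $T/2<1$ that factor blows up as $|n|\to\infty$ and cannot be absorbed into $b\,e^{-\mu|n|}$. The correct bookkeeping (which the paper does implicitly) is to keep the ratio $\frac{T-t}{T/2}<1$ intact and use that $x^{\rho}$ is decreasing in $\rho$ for $x\in(0,1)$ to replace the $n$-dependent exponent $c(n^2-(p+2)/p)$ by its minimum $c(4-(p+2)/p)$ \emph{before} separating the factors; then only $(T/2)^{-c(4-(p+2)/p)}$ enters the constant, uniformly in $n$.
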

	
	\begin{proof} (See also the proof of Lemma 3.6 in \cite{CortissozMurcia})
		First we have
		
		\begin{eqnarray*}
			\left|\kf\right|&\leq&\left|\hat{k}(n,T-\delta)\right|e^{-\left(n^2-\frac{p+2}{p}\right)\int_{T-\delta}^t\hat{k}(0,s)^{p+1}\,ds}\\
		&&\qquad\qquad\qquad	+\int_{T-\delta}^{t}\left|h(s)\right|e^{-\left(n^2-\frac{p+2}{p}\right)\int_{s}^t\hat{k}(0,\sigma)^{p+1}\,d\sigma}\,ds\\
			&\leq& \left|\hat{k}(n,T-\delta)\right|\left(\frac{T-t}{\delta}\right)^{\eta\alpha(n,p)}\\
			&&\qquad\qquad\qquad+\int_{T-\delta}^{t}\left|h(s)\right|e^{-\left(n^2-\frac{p+2}{p}\right)\int_{s}^t\hat{k}(0,\sigma)^{p+1}\,d\sigma}\,ds\\
		\end{eqnarray*}
		
		where 
		
		\[\alpha(n,p)=\left(n^2-\frac{p+2}{p}\right)\frac{p}{p+1}.\]
		
		We are going to estimate the term inside the integral in the last inequality. As before we split $h(s)$ into sums of the form 
		
		\[J_{i_1,i_2,\dots,i_l}=\sum_{\substack{\mathbf{q}\in\mathcal{A}_0\cap \Zel^{p+2};\\q_j=0\Leftrightarrow j=i_1,i_2,\dots,i_l }}H\left(p,q_1,q_2\right)\kst{p+2}\left(\mathbf{q},s\right)\]

		using this, the Trapping Lemma and the observations after its statement, we get 
		
		\[\left|J_{i_1,i_2,\dots,i_l}\right|\leq \frac{Ce^{-\mu|n|}}{(T-s)^{\frac{l}{p+1}}},\]
		
		and since $l\leq p$, we finally obtain 
		
		\[\left|h(s)\right|\leq \frac{Ce^{-\mu|n|}}{(T-s)^{\frac{p}{p+1}}}\]
		
		Then we have
		
		\begin{align*}
			\left|\kf\right|&\leq \left|\hat{k}(n,T-\delta)\right|\left(\frac{T-t}{\delta}\right)^{\eta\alpha(n,p)}\\&+C\left(T-t\right)^{\eta\alpha(n,p)}e^{-\mu|n|}\int_{T-\delta}^{t}\frac{1}{\left(T-s\right)^{\eta\alpha(n,p)+\frac{p}{p+1}}}\,ds\\
		\end{align*}
		
		Without lost of generality we can assume $\eta\leq\frac{1}{2}$. Now using again (\ref{eq:expbound}) and the fact that $\alpha(n,p)\geq \alpha(2,p)>0$, obtain
		
		\begin{align*}
		\left|\kf\right|&\leq \left|\hat{k}\left(n,T-\delta\right)\right|\left(\frac{T-t}{\delta}\right)^{\eta\alpha(n,p)} +Ce^{-\mu |n|}(T-t)^{1-\frac{p}{p+1}}\\
		&\leq \frac{b}{2}e^{-\mu|n|}\left(\left(T-t\right)^{\eta\alpha(n,p)}+\left(T-t\right)^{1-\frac{p}{p+1}}\right),	\end{align*}
		
		then we have, 
		
		\[\left|\kf\right|\leq b e^{-\mu|n|}(T-t)^{\epsilon},\]
		
		for any $0<\epsilon<\min\left\{\frac{1}{2}\alpha\left(2,p\right),1-\frac{p}{p+1}\right\}=\epsilon_0$.
	\end{proof}
	
	Next we are going to improve on the decay estimates of the Fourier coefficients. 
	To be able to do this, we will need the following
	lemma.
	
	\begin{lema}
	\label{lem:blowup0}
	There is a $t_0$ such that if $t\in (t_0,T)$, then we have the estimates 
	
	\[
	\left(\frac{p+1}{p}\right)^{\frac{1}{p+1}}
	\kfc\leq \frac{1}{\left[\left(T-t\right)-c_1\left(T-t\right)^{1+\frac{2}{p+1}}\right]^{\frac{1}{p+1}}},\]
	and
	\[
	\left(\frac{p+1}{p}\right)^{\frac{1}{p+1}}
	\kfc\geq
	\frac{1}{\left[\left(T-t\right)+c_1\left(T-t\right)^{1+\frac{2}{p+1}}\right]^{\frac{1}{p+1}}}.
	\]
\end{lema}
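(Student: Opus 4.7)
The plan is to introduce the auxiliary function $w(t) = \hat{k}(0,t)^{-(p+1)}$ and show that it satisfies an ODE of the form
\[
\frac{dw}{dt} = -\frac{p+1}{p} + E(t),
\]
with an error $E$ controlled by $C(T-t)^{2/(p+1)}$. Since by the Blow-up Lemma $w(t)\to 0$ as $t\to T^{-}$, integrating this ODE from $t$ to $T$ will yield $\left|w(t) - \tfrac{p+1}{p}(T-t)\right| \leq c_1 (T-t)^{1+2/(p+1)}$, from which the two inequalities of the lemma follow by inverting.

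To set this up, I would differentiate $w$ using the equation for $\hat{k}(0,t)$ in (\ref{eq:efourier}) and obtain
\[
\frac{dw}{dt} = -\frac{p+1}{p} - (p+1)\,\frac{U}{\hat{k}(0,t)^{p+2}}, \qquad U := \sum_{\mathbf{q}\in\mathcal{A}_0\cap\mathcal{Z}^{p+2}} H(p,q_1,q_2)\,\hat{k}^{*(p+2)}(\mathbf{q},t).
\]
The central estimate to prove is $|U| \leq C_p \hat{k}(0,t)^{p}$. I would split the sum according to the number $p_0$ of zero entries in $\mathbf{q}$; since $\mathbf{q}\in\mathcal{A}_0$ requires at least two nonzero coordinates, $p_0 \in\{0,1,\ldots,p\}$. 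For each fixed $p_0$ the factor $\hat{k}(0,t)^{p_0}$ pulls out, and the remaining sum runs over products of nonzero Fourier modes weighted by $H(p,q_1,q_2)$. The Trapping Lemma provides the uniform bound $|\hat{k}(q,t)| \leq \hat{\psi}(0)/(c_p q^2)$ for $q\neq 0$, and combined with the polynomial growth $|H(p,q_1,q_2)|\leq C(1+q_1^2+q_2^2)$ one checks that each of these subsums converges absolutely and is uniformly bounded in $t$. The dominant contribution comes from $p_0=p$, giving $|U|\leq C_p \hat{k}(0,t)^p$.

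Combining this bound with the lower estimate $\hat{k}(0,t)^{p+1}\geq c/(T-t)$ from the Blow-up Lemma yields
\[
|E(t)| \leq \frac{(p+1)C_p}{\hat{k}(0,t)^2} \leq C'(T-t)^{2/(p+1)}.
\]
Integrating $w'(s) = -(p+1)/p + E(s)$ from $t$ to $T$, and using that $w(T^{-})=0$ by the Blow-up Lemma, produces $w(t) = \tfrac{p+1}{p}(T-t) - \int_t^T E(s)\,ds$, and the remainder integral is bounded in absolute value by $c_1 (T-t)^{1+2/(p+1)}$. Choosing $t_0<T$ close enough to $T$ so that $(T-t) - c_1 (T-t)^{1+2/(p+1)}>0$ for $t\in(t_0,T)$, taking reciprocals and $(p+1)$-th roots yields the two inequalities claimed.

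The main obstacle I anticipate is the careful bookkeeping behind $|U|\leq C_p\hat{k}(0,t)^p$: the $H(p,q_1,q_2)$ factor can be as large as $q_1^2$, which exactly matches the Trapping-Lemma decay $|\hat{k}(q_1,t)|\sim 1/q_1^2$, so convergence in the subsums is obtained only by pairing this large factor with at least one further nonzero index (since $\mathbf{q}\in\mathcal{A}_0$ requires two) together with the constraint $\sum q_j=0$. Once this estimate is in hand, the remainder of the argument reduces to an elementary ODE comparison and is routine.
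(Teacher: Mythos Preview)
Your approach is essentially the same as the paper's: the paper derives the differential inequalities
\[
\frac{1}{p}\hat{k}(0,t)^{p+2}-A'\hat{k}(0,t)^{p}\;\leq\;\frac{d}{dt}\hat{k}(0,t)\;\leq\;\frac{1}{p}\hat{k}(0,t)^{p+2}+A\hat{k}(0,t)^{p},
\]
divides through by $\hat{k}(0,t)^{p+2}$ (which is exactly your computation of $w'$ up to a factor $-(p+1)$), invokes the Blow-up Lemma to replace $\hat{k}(0,t)^{-2}$ by $C(T-t)^{2/(p+1)}$, and integrates. The only cosmetic difference is that the paper asserts the bound $|U|\leq A\,\hat{k}(0,t)^{p}$ directly (it is the same estimate already recorded as $|h(t)|\leq C_p\hat{k}(0,t)^{p}$ right after the Trapping Lemma), whereas you sketch the splitting-by-$p_0$ argument explicitly; your remark about the $q_1^2$ factor being exactly compensated by the Trapping-Lemma decay is correct and is precisely why those sums converge.
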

\begin{proof}
	We have that the following differential inequality
	
	\[\dif{}\kfc\leq \frac{1}{p}\kfc^{p+2}+A\kfc^p,\]
	
	holds for a constant $A>0$ independent of $t$. This is equivalent to
	
	\[\frac{1}{\kfc^{p+2}}\dif{}\kfc\leq \frac{1}{p}+A\kfc^{-2}.\]
	
	Using Lemma 5, from the previous differential inequality we obtain
	
	\[\frac{1}{\kfc^{p+2}}\dif{}\kfc\leq \frac{1}{p}+C\left(T-t\right)^{\frac{2}{p+1}},\]
	
	The result follows by integration. For the other inequality, notice that there exists a constant $A'$ so we 
	also have the following differential inequality
	
	\[\dif{}\kfc\geq \frac{1}{p}\kfc^{p+2}+A'\kfc^p.\]
	
	\end{proof}

	In order to proceed we use previous proposition to estimate the integral
	
	\[I=\frac{p+1}{p}\int_{T-\delta}^t\hat{k}(0,\tau)^{p+1}\,d\tau\]
	
	from below, from previous lemma, since $\delta$ is small, using Taylor's Theorem, this shows
	
	\[I\geq \int_{T-\delta}^{t}\frac{d\tau}{T-\tau+(T-\tau)^{1+\frac{2}{p+1}}}\geq -\ln\left(\frac{T-t}{\delta}\right)-c,\]
	
	$c>0,$ and using this and (\ref{eq:intfour}), we get 
	
	\begin{eqnarray}
	\left|\kf\right|&\leq& C\left|\hat{k}(n,T-\delta)\right|\left(\frac{T-t}{\delta}\right)^{\alpha(n,p)}\\ \notag
	&&+C(T-t)^{\alpha(n,p)}\int_{T-\delta}^{t}\left(\frac{1}{T-s}\right)^{\alpha(n,p)}h(s)\,ds
	\label{eq:ineqimp}
	\end{eqnarray}
	 here
	 
	 \begin{equation}
	 \label{definitionofalpha}
	 \alpha(n,p)=(n^2-\frac{p+2}{p})\frac{p}{p+1}.
	 \end{equation}
	 
	 Using the estimate of the proposition and the fact that if $\mathbf{q}\in\mathcal{A}_{n}$ then $\mathbf{q}$ has at least two entries different from $0$,we get
	 
	 \[\left|h(s)\right|\leq\frac{Ce^{-\mu'|n|}}{\left(T-s\right)^{\frac{p}{p+1}}}(T-s)^{2\epsilon}.\]
	 
	 If we introduce the bound from Proposition \ref{propfirstbound} in (\ref{eq:ineqimp}) we get,
	 
	 \begin{eqnarray*}
	 \left|\kf\right|&\leq& C\left|\hat{k}(n,T-\delta)\right|\left(\frac{T-t}{\delta}\right)^{\alpha(n,p)}\\
	 &&+C(T-t)^{\alpha(n,p)}e^{-\mu'|n|}\int_{T-\delta}^{t}\frac{(T-t)^{2\epsilon}}{\left(T-s\right)^{\alpha(n,p)+\frac{p}{p+1}}}\,ds
	 \end{eqnarray*}
	 
	 for a well chosen $0<\mu'<\mu$, and from which we obtain the estimate (we use again the fact $\alpha(n,p)\geq \alpha(2,p)$)
	 
	 \[\left|\kf\right|\leq C'e^{-\mu'|n|}(T-t)^{\min\{\alpha(2,p),1-\frac{p}{p+1}+2\epsilon\}}.\]
	 
	 If we assume that $\alpha(2,p)>1-\frac{p}{p+1}+2\epsilon,$ using this new bound an pluggin it into (\ref{eq:ineqimp}), we improve again our estimate on $\kf$:
	 
	 \[\left|\kf\right|\leq C''e^{-\mu''|n|}(T-t)^{\min\left\{\alpha(2,p),3\left(1-\frac{p}{p+1}\right)+4\epsilon\right\}}\quad\left(0<\mu''<\mu'\right)\],
	 
	 Finally , if we repeat this procedure a finite number of times we arrive at  
	 
	 \begin{equation}
	 \label{decayintimeestimate}
	 \left|\kf\right|\leq De^{-\xi|n|}(T-t)^{\alpha(2,p)},\quad n \neq 0, \pm 1.
	 \end{equation}
	 where $\xi$ is a constant independent of $n$, and $\alpha\left(2,p\right)$ is defined
	 by (\ref{definitionofalpha}) (so is value its $\left(3p-2\right)/\left(p+1\right)$). 
	 
	 Now, we must show now that the wavenumbers $\hat{k}\left(\pm 1,t\right)$
	 satisfy the same estimate.
	 In this case we write
	 \[
	 z\left(n,t\right)=\left(\frac{p+1}{p}\right)^{\frac{1}{p+1}}
	 \left(T-t\right)^{\frac{1}{p+1}}
	 \hat{k}\left(n,t\right).
	 \]
	 And we have an identity which follows from $Q\left(k\right)=0$ (here we use that $\hat{z}\left(-1,t\right)$ is the conjugate
	 of $\hat{z}\left(1,t\right)$, as $z$ is real valued)
	 \[
	 \sum_{n=0}^{\infty}
	 \binom{2n}{n}\left|z\left(1,t\right)\right|^{2n} z\left(1,t\right)
	 = \sum_{m=2}^{\infty}\sum'_{q_1+q_2+\dots+q_m=1}z\left(q_1\right)
	 \cdots
	 z\left(q_m,t\right),
	 \]
	 where the prime ($'$) in the inner sum 
	 of the righthand side indicates that at least one of the
	 $q_j\neq \pm 1$. Using similar computations as in the proof of
	 Lemma \ref{controllingsmallwavenumbers}, together with (\ref{decayintimeestimate}), we can conclude that
	 \[
	  \sum_{n=0}^{\infty}\binom{2n}{n}\left|z\left(1,t\right)\right|^{2n} z\left(1,t\right)=O\left(\left(T-t\right)^{\frac{3p-1}{p+1}}\right),
	 \]
	 hence, if $\delta>0$ is small enough, we can deduce that
	 \[
	  z\left(1,t\right)=O\left(\left(T-t\right)^{\frac{3p-1}{p+1}}\right),
	 \]
	 which is just that $\left|\hat{k}\left(\pm 1,t\right)\right|\leq C\left(T-t\right)^{\frac{3p-2}{p+1}}$.
	 So we have proved
	\begin{prop}
	\label{uniformizationunnormalised}
	Let $\psi>0$ is a smooth $2\pi$-periodic function 
	which satisfies that $Q\left(\psi\right)=0$. There exists a positive constant $c_p$ such that if
	
	\[\hat{\psi}(0)\geq c_p\norma{\psi}{2},\]
	
	then a solution to (\ref{eq:bvp}) satisfies
	
	\[\norma{k\left(\theta,t\right)-\hat{k}\left(0,t\right)}{C^{k}[0,2 \pi]}\leq M_{p,k}(T-t)^{\frac{3p-2}{p+1}}\]
	
	where $T$ is the blow-up time and $M_{p,k}$ is a constant that depends only on $p,k$ and $\psi$.
	\end{prop}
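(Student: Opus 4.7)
The plan is straightforward once the preceding Fourier estimates are combined. The goal is to obtain, for every $n\neq 0$, a single bound of the form
\[
|\hat k(n,t)|\leq D e^{-\xi|n|}(T-t)^{(3p-2)/(p+1)},
\]
and then to read off the $C^l$ estimate by differentiating the Fourier series term by term.

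First I would gather the two pieces of information already in hand. The bootstrap argument culminating in (\ref{decayintimeestimate}) gives the bound directly for $|n|\geq 2$ with constants $\xi>0$ and $D$ independent of $\mathcal{Z}$. The separate argument based on the identity $Q(k)=0$, which expresses $z(1,t)$ as a convergent power series in $z(\pm 1,t)$ together with higher-order wavenumbers controlled by (\ref{decayintimeestimate}), shows that the same time decay (with possibly different constants) is attained by $\hat k(\pm 1,t)$. Merging the two cases, and using that all estimates are uniform in the truncation set $\mathcal{Z}$ so that a standard diagonal/compactness argument transfers the bound to the actual solution of (\ref{eq:bvp}), I obtain the displayed inequality for every nonzero $n$.

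Next I would write
\[
k(\theta,t)-\hat k(0,t)=\sum_{n\neq 0}\hat k(n,t)e^{in\theta},
\]
differentiate $l$ times in $\theta$, and take absolute values to get
\[
\left|\frac{d^l}{d\theta^l}\bigl(k-\hat k(0,t)\bigr)\right|\leq\sum_{n\neq 0}|n|^l|\hat k(n,t)|\leq D(T-t)^{(3p-2)/(p+1)}\sum_{n\neq 0}|n|^l e^{-\xi|n|}.
\]
For each fixed $l$ the last series converges, giving the desired $C^l$ bound with $M_{p,l}$ equal to $D$ times that sum, so $M_{p,l}$ depends only on $p$, $l$, and $\psi$.

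The only delicate point worth flagging is that the exponential-in-$|n|$ decay must genuinely survive the bootstrap iteration leading to (\ref{decayintimeestimate}). Each iteration shrinks the rate slightly, passing from $\mu$ to $\mu'<\mu$ to $\mu''<\mu'$ and so on, but only finitely many iterations are needed to drive the time exponent up to $\alpha(2,p)$, so a strictly positive decay rate $\xi>0$ remains at the end. Once that is verified, the translation from Fourier-wavenumber control to $C^l$ control is immediate and the proposition follows.
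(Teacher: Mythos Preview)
Your proposal is correct and follows essentially the same route as the paper: the proposition is stated immediately after the bootstrap culminating in (\ref{decayintimeestimate}) and the separate $Q(k)=0$ argument for $n=\pm 1$, with the phrase ``So we have proved'' standing in for the Fourier-to-$C^l$ conversion you spell out explicitly. Your observation that only finitely many bootstrap steps are used, so a positive exponential rate $\xi$ survives, is exactly the point the paper leaves implicit.
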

	
 	We normalize the solution of (\ref{eq:bvp}) by means of the following transformation:
	
	\[\tilde{k}\left(\theta,t\right)=\left(\frac{p+1}{p}\right)^{\frac{1}{p+1}}\left(T-t\right)^{\frac{1}{p+1}}k\left(\theta,t\right),\quad \tau=-\frac{1}{p+1}\log\left(1-\frac{t}{T}\right)\]
	
	Applying chain rule, we obtain the following normalized version of (\ref{eq:bvp})
	
	\begin{equation}
	\begin{cases}
	\dfrac{\partial\tilde{k}}{\partial \tau}=p\tilde{k}^{p+1}\dfrac{\partial^2\tilde{k}}{\partial\theta^2}+p(p-1)\tilde{k}^p
	\left(\dfrac{\partial\tilde{k}}{\partial\theta}\right)^2+\tilde{k}^{p+2}-\tilde{k}\quad\text{ in }\quad\left[0,2\pi\right]\times\left(0,\infty\right)\\
	\tilde{k}(\theta,0)=\left(\dfrac{(p+1)T}{p}\right)^{\frac{1}{p+1}}\psi\left(\theta\right).
	\end{cases}
	\label{eq:bvpnormalized}
	\end{equation}	
	
	Using this normalisation, Proposition \ref{uniformizationunnormalised} translates into: 
	\begin{cor}
	\label{firstuniformization}
Let $\psi>0$ is a smooth $2\pi$-periodic function which satisfies that $Q\left(\psi\right)=0$. There exists a positive constant $c_p$ such that if
	
	\[\hat{\psi}(0)\geq c_p\norma{\psi}{2},\]
	
	then the normalization $\tilde{k}$ of $k$ satisfies:
	
	\[\norma{\tilde{k}-\hat{\tilde{k}}(0,t)}{C^{k}[0,2 \pi]}\leq M_{p,k}e^{-(3p-1)\tau},\]	
	where $M_{p,k}$ is a positive constant that depends only on $p,k$ and $\psi$. \label{cor:impor2}
	\end{cor}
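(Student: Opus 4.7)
The plan is to deduce the corollary directly from Proposition \ref{uniformizationunnormalised} by tracking how its bound transforms under the normalisation
\[
\tilde k(\theta,\tau)=\left(\frac{p+1}{p}\right)^{1/(p+1)}(T-t)^{1/(p+1)}k(\theta,t),\qquad \tau=-\frac{1}{p+1}\log\left(1-\frac{t}{T}\right).
\]
Since this change of variables rescales in time only and leaves the spatial parameter $\theta$ untouched, the argument reduces to bookkeeping on the scale factor.

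First I would observe that because extracting the zeroth Fourier wavenumber is $\theta$-linear and the rescaling factor is $\theta$-independent, the normalisation commutes with subtracting the average, so
\[
\tilde k(\theta,\tau)-\hat{\tilde k}(0,\tau)=\left(\frac{p+1}{p}\right)^{1/(p+1)}(T-t)^{1/(p+1)}\bigl(k(\theta,t)-\hat k(0,t)\bigr).
\]
The same identity holds after differentiating any number of times in $\theta$, since the prefactor does not depend on $\theta$. Applying the bound of Proposition \ref{uniformizationunnormalised} on the right-hand side then yields
\[
\bigl\|\tilde k-\hat{\tilde k}(0,\tau)\bigr\|_{C^{k}[0,2\pi]}\leq C_{p}\,M_{p,k}\,(T-t)^{(3p-2)/(p+1)+1/(p+1)}=C_{p}\,M_{p,k}\,(T-t)^{(3p-1)/(p+1)}.
\]

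Next I would convert this $(T-t)$-decay into exponential decay in $\tau$: from the definition of $\tau$ one has $T-t=T e^{-(p+1)\tau}$, hence $(T-t)^{(3p-1)/(p+1)}=T^{(3p-1)/(p+1)}e^{-(3p-1)\tau}$, and absorbing $T^{(3p-1)/(p+1)}$ into a new constant $M_{p,k}$ produces exactly the bound in the statement.

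The substantive analytic content has already been carried out in Proposition \ref{uniformizationunnormalised}, so there is no real obstacle at this step. The only point worth verifying explicitly is that no hidden $\theta$-dependence is introduced by the change of variables; this is clear because the rescaling factor depends solely on $\tau$ (equivalently on $t$) and the parametrisation by the tangent angle $\theta\in[0,2\pi]$ is preserved, so spatial derivatives commute with the normalisation up to the multiplicative factor already accounted for above.
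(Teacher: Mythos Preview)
Your proposal is correct and follows exactly the paper's own approach: the paper simply states that ``using this normalisation, Proposition \ref{uniformizationunnormalised} translates into'' the corollary, and your argument supplies precisely the bookkeeping (the $\theta$-independent rescaling commutes with subtraction of the average and with $\theta$-derivatives, and $(T-t)^{(3p-1)/(p+1)}=T^{(3p-1)/(p+1)}e^{-(3p-1)\tau}$) that makes this translation explicit.
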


	\section{Exponential convergence of the normalised curvature towards 1: Proof of the main result}
	
	We will need the following improvement over Lemma \ref{lem:blowup0}.
	\begin{lema}
	The following estimates hold
	\[
	\left(\frac{p+1}{p}\right)^{\frac{1}{p+1}}\hat{k}\left(0,t\right)
	\leq \frac{1}{\left[\left(T-t\right)-a_0\left(T-t\right)^{1+\frac{6p-2}{p+1}}\right]^{\frac{1}{p+1}}},
	\]
	and
	\[
	\left(\frac{p+1}{p}\right)^{\frac{1}{p+1}}\hat{k}\left(0,t\right)\geq
	\frac{1}{\left[\left(T-t\right)+a_1\left(T-t\right)^{1+\frac{6p-2}{p+1}}\right]^{\frac{1}{p+1}}}.
	\]
	\end{lema}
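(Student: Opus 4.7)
The plan is to refine the argument of Lemma \ref{lem:blowup0} by replacing the crude bound $|U(t)|\leq A\,\hat{k}(0,t)^p$ on the nonlinear remainder
\[
U(t)=\sum_{\mathbf{q}\in\mathcal{A}_0\cap \Zel^{p+2}}H(p,q_1,q_2)\,\hat{k}^{*(p+2)}(\mathbf{q},t)
\]
in the ODE $\dif{}\hat{k}(0,t)=\frac{1}{p}\hat{k}(0,t)^{p+2}+U(t)$ with the much sharper decay estimate now at our disposal from Proposition \ref{uniformizationunnormalised}. That proposition (together with the uniform estimate on $\hat{k}(\pm 1,t)$ proved alongside it) gives $|\hat{k}(n,t)|\leq Ce^{-\xi|n|}(T-t)^{(3p-2)/(p+1)}$ for every $n\neq 0$.

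The first step is to bound $U(t)$. For any $\mathbf{q}\in\mathcal{A}_0$, at least two of the $p+2$ coordinates are nonzero, so each summand is controlled by a product of $l\geq 2$ factors of size $(T-t)^{(3p-2)/(p+1)}$ (each carrying exponential decay in its wavenumber) and $p+2-l$ factors $\hat{k}(0,t)=O((T-t)^{-1/(p+1)})$ coming from Lemma \ref{lem:blowup0}. The polynomial growth of $|H(p,q_1,q_2)|$ is dominated by the exponential factors $e^{-\xi|q_j|}$, so every infinite sum converges absolutely and uniformly in $\Zel$. A quick computation shows the exponent of $(T-t)$ coming from the $l$-th class of terms is $(l(3p-1)-(p+2))/(p+1)$, which is smallest for $l=2$, yielding
\[
|U(t)|\leq C(T-t)^{(5p-4)/(p+1)}.
\]

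Next, divide the ODE for $\hat{k}(0,t)$ by $\hat{k}(0,t)^{p+2}$ to write
\[
-\dif{}\!\left(\frac{1}{(p+1)\hat{k}(0,t)^{p+1}}\right)=\frac{1}{p}+\frac{U(t)}{\hat{k}(0,t)^{p+2}}.
\]
Invoking once more Lemma \ref{lem:blowup0} to write $\hat{k}(0,t)^{-(p+2)}=O((T-t)^{(p+2)/(p+1)})$, the error term is $O((T-t)^{(6p-2)/(p+1)})$. Integrating from $t$ to $T$ and using $\hat{k}(0,t)\to\infty$ as $t\to T$, we obtain
\[
\frac{1}{(p+1)\hat{k}(0,t)^{p+1}}=\frac{T-t}{p}+O\!\left((T-t)^{1+(6p-2)/(p+1)}\right),
\]
which, after rearrangement and the usual $(1\pm x)^{\pm 1/(p+1)}$ expansion, gives both the upper and lower bounds of the lemma with appropriate constants $a_0,a_1$.

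The main obstacle is not analytical but bookkeeping: one must check that the multiple sums defining $U(t)$ are absolutely convergent with bounds independent of $\Zel$, so that the implicit constants depend only on $p$ and $\psi$. The exponential factors $e^{-\xi|q|}$ inherited from Proposition \ref{uniformizationunnormalised} make this routine, and the factor counting that identifies $l=2$ as the dominant regime is the only genuinely new ingredient beyond what was used in Lemma \ref{lem:blowup0}.
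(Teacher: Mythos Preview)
Your argument is correct and follows essentially the same route as the paper: you use the sharp decay estimate $|\hat{k}(n,t)|\leq C e^{-\xi|n|}(T-t)^{(3p-2)/(p+1)}$ for $n\neq 0$ to bound the remainder $U(t)$, divide the ODE for $\hat{k}(0,t)$ by $\hat{k}(0,t)^{p+2}$, and integrate from $t$ to $T$. The only cosmetic difference is that the paper keeps the factor $\hat{k}(0,t)^{p}$ explicit in the differential inequality, writing $|U(t)|\leq A(T-t)^{(6p-4)/(p+1)}\hat{k}(0,t)^{p}$, whereas you convert that factor immediately using Lemma \ref{lem:blowup0} to get $|U(t)|\leq C(T-t)^{(5p-4)/(p+1)}$; after dividing by $\hat{k}(0,t)^{p+2}$ the two computations coincide.
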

	
	\begin{proof}
     Notice that using (\ref{decayintimeestimate}) and the equation satisfied by $\kfc$, we have the differential inequality,
     which is valid for a constant $A>0$ 
	
	\[\dif{}\kfc\leq \frac{1}{p}\kfc^{p+2}+A\left(T-t\right)^{\frac{6p-4}{p+1}}\kfc^{p}.\]
	
	Integrating, from $t$ to $T$, we obtain the second inequality. Analogously for a constant $A'$, we have the differential inequality
		\[\dif{}\kfc\geq \frac{1}{p}\kfc^{p+2}-A'\left(T-t\right)^{\frac{6p-4}{p+1}}\kfc^{p},\]
	which by integration gives the first inequality.	
\end{proof} 
	Finally we have our main result.
	
	\begin{theorem}
		Let $\psi>0$ be the initial condition  of (\ref{eq:bvp}) (so it is the curvature
		function of a convex simple curve). Then there exists a constant $c_p>0$ such that if
	
		\[\hat{\psi}(0)\geq c_p\norma{\psi}{2}, \] 
		
		then the solution to (\ref{eq:bvpnormalized}) satisfies 
		
		\[\norma{\tilde{k}-1}{C^{l}\left[0,2\pi\right]}\leq C_{p,l}e^{-\left(3p-1\right)\tau},\]
		
		where $C_{p,l}$ is a constant that only depends on the initial condition $\psi$ and $p$ and $l$. 
		\end{theorem}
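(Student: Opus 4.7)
The plan is to combine Corollary \ref{firstuniformization} with the improved blow-up estimate just established. Corollary \ref{firstuniformization} already controls the difference between $\tilde{k}$ and its zeroth Fourier mode $\hat{\tilde{k}}(0,\tau)$ with the desired rate, so by the triangle inequality
\[
\norma{\tilde{k}-1}{C^l[0,2\pi]}
\leq
\norma{\tilde{k}-\hat{\tilde{k}}(0,\tau)}{C^l[0,2\pi]}
+ \bigl|\hat{\tilde{k}}(0,\tau)-1\bigr|,
\]
and the first term is already $O(e^{-(3p-1)\tau})$, the entire problem reduces to showing that $\hat{\tilde{k}}(0,\tau)\to 1$ at the rate $e^{-(3p-1)\tau}$ (or faster).

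To estimate $|\hat{\tilde{k}}(0,\tau)-1|$, I would use the definition
\[
\hat{\tilde{k}}(0,t)=\left(\frac{p+1}{p}\right)^{\frac{1}{p+1}}(T-t)^{\frac{1}{p+1}}\hat{k}(0,t),
\]
together with the two-sided bound on $\hat{k}(0,t)$ furnished by the improved blow-up lemma. Plugging these in, both the upper and lower bounds take the form $[1\mp a(T-t)^{(6p-2)/(p+1)}]^{-1/(p+1)}$. A first-order Taylor expansion (valid because $T-t$ is small near the singular time) then yields
\[
\bigl|\hat{\tilde{k}}(0,t)-1\bigr|\leq C(T-t)^{\frac{6p-2}{p+1}}.
\]

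Now I would translate to the normalised time variable. Since $\tau=-\frac{1}{p+1}\log(1-t/T)$ gives $T-t = T\,e^{-(p+1)\tau}$, the previous estimate becomes
\[
\bigl|\hat{\tilde{k}}(0,\tau)-1\bigr|\leq C' e^{-(6p-2)\tau}.
\]
For every integer $p\geq 1$ we have $6p-2\geq 3p-1$, so this bound is at least as strong as the rate we want. Combining with Corollary \ref{firstuniformization} via the triangle inequality above yields the claimed estimate
\[
\norma{\tilde{k}-1}{C^l[0,2\pi]}\leq C_{p,l}\,e^{-(3p-1)\tau}.
\]

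There is essentially no hard obstacle at this stage: all the heavy lifting was done in the preceding sections (controlling the wavenumbers and obtaining the refined blow-up comparison). The only point requiring some care is verifying that the Taylor expansion of $(1\mp x)^{-1/(p+1)}$ can indeed be applied uniformly for $t$ near $T$, which amounts to ensuring $a_0(T-t)^{(6p-2)/(p+1)}$ is bounded away from $1$; this is automatic for $t$ close enough to $T$ (and for earlier times the exponential bound is trivial up to enlarging $C_{p,l}$).
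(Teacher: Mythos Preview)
Your proposal is correct and follows essentially the same approach as the paper: use the improved blow-up lemma together with a Taylor expansion to show $|\hat{\tilde{k}}(0,\tau)-1|\leq Ce^{-(6p-2)\tau}$, then combine this with Corollary~\ref{firstuniformization} via the triangle inequality. The only cosmetic difference is that the paper states the triangle inequality at the end rather than the beginning, and does not explicitly remark that $6p-2\geq 3p-1$ or justify the Taylor step, both of which you handle cleanly.
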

		\begin{proof}
			Let 
			
			\[\tilde{u}(0,t)=\frac{1}{2\pi}\int_{0}^{2\pi}\tilde{k}\left(\theta,t\right)\,d\theta=\left(\frac{p+1}{p}\right)^{\frac{1}{p+1}}\left(T-t\right)^{\frac{1}{p+1}}\kfc,\]
			
			now we compute 
			
			\begin{align*}
			\tilde{u}(0,t)-1&= \left(\frac{p+1}{p}\right)^{\frac{1}{p+1}}\kfc(T-t)^{\frac{1}{p+1}}-1\\
			&\leq \frac{(T-t)^{\frac{1}{p+1}}}{\left[(T-t)-c_1(T-t)^{1+\frac{6p-2}{p+1}}\right]^{\frac{1}{p+1}}}-1
			\leq C(T-t)^{\frac{6p-2}{p+1}},
			\end{align*}
			
			Analogously, 
	
	\[1-\tilde{u}(0,t)\leq C(T-t)^{\frac{6p-2}{p+1}},\]
	
	In this case, $ \displaystyle{e^{-\tau}=\left(\frac{T-t}{T}\right)^{\frac{1}{p+1}}}$, then
	
	\[\left|\frac{1}{2\pi}\int_{0}^{2\pi}\tilde{k}(\theta,\tau)\,d\theta-1\right|\leq Ce^{-(6p-2)\tau}\]
	
	Applying the triangular inequality and Corollary \ref{cor:impor2} we can conclude that
	
	\[\norma{\tilde{k}-1}{C^{l}\left[0,2\pi\right]}\leq\norma{\tilde{k}-\tilde{u}}{C^{l}\left[0,2\pi\right]}+\norma{\tilde{u}-1}{C^{l}\left[0,2\pi\right]}\leq C_{p,l}e^{-(3p-1)\tau,} \]
	
	for some constant $C_{p,l}$ that depends on $p$ and $l$.

\end{proof}

\subsection{Final Remarks}

The rate of convergence obatined in Theorem, seems to be the best possible in general. We have not been able to
produce an example where the rate given in the theorem is met; however, to justify our claim, we refer to the comments
after the statement of Theorem 2.2 in \cite{CortissozMurcia}: The first positive eigenvalue of the elliptic part
of (\ref{eq:bvpnormalized}), i.e., the left hand side of the equation, when linearised around the steady solution 
$\tilde{k}\equiv 1$ is precisely $3p-1$.

\end{document}